\documentclass{amsart}
\usepackage{mathscinet}

\usepackage[all]{xy}
\usepackage{graphics,bm}
\usepackage{color}
\usepackage[T1]{fontenc}
\usepackage{parskip}
\usepackage[pagebackref, colorlinks]{hyperref}
\usepackage{enumitem}
\usepackage{comment}
\usepackage{mathtools}


\usepackage{tikz,tikz-cd}
\usetikzlibrary{decorations.markings}

\usepackage[colorinlistoftodos,prependcaption]{todonotes}

\newcommand{\B}[1]{{\mathbb #1}}

\newcommand{\MCG}{\mathcal{M}}
\newcommand{\Diff}{\operatorname{Diff}}
\newcommand{\Homeo}{\operatorname{Homeo}}

\newtheorem{theorem}{Theorem}[section]
\newtheorem{theorem*}{Theorem}
\newtheorem{lemma}[theorem]{Lemma}

\newtheorem{corollary}[theorem]{Corollary}

\theoremstyle{definition}
\newtheorem{example}[theorem]{Example}

\newtheorem{remark}[theorem]{Remark}
\newtheorem*{remark*}{Remark}
\newtheorem*{remarks*}{Remarks}
\newtheorem*{corollary*}{Corollary}

\numberwithin{figure}{section}
\numberwithin{table}{section}
\numberwithin{equation}{section}

\def\B{\mathbf}

\newcommand{\la}{\langle}
\newcommand{\ra}{\rangle}

\newcommand{\OP}{\operatorname}
\newcommand{\MP}{{\scriptscriptstyle \mathcal{M}_+}}
\newcommand{\M}{{\scriptscriptstyle \mathcal{M}}}

\newcommand{\wt}{\widetilde}

\newcommand{\n}{|\hspace{-1px}|}

\newcommand{\Gb}{\Gamma_{\hspace{-2px}b}}

\begin{document}

\title[Volume and Euler classes in transformation groups]{Volume and Euler classes in bounded cohomology of transformation groups}
\author{Michael Brandenbursky}\thanks{M.B. was partially supported by the Israel Science Foundation grant 823/23.}
\address{Department of Mathematics, Ben Gurion University, Israel}
\email{brandens@bgu.ac.il} 
\author{Micha\l\ Marcinkowski}\thanks{M.M. was supported by Opus 2017/27/B/ST1/01467 funded by Narodowe Centrum Nauki.}
\address{Institute of Mathematics, Wroc\l aw University, Poland}
\email{marcinkow@math.uni.wroc.pl}

\begin{abstract} 
Let $M$ be an oriented
smooth manifold, and $\Homeo(M,\omega)$ the group of measure preserving homeomorphisms of $M$, where $\omega$ is a finite measure induced by a volume form. In this paper, we define volume and Euler classes in bounded cohomology of an infinite dimensional transformation group $\Homeo_0(M,\omega)$ and $\Homeo_+(M,\omega)$ respectively, and in several cases prove their non-triviality. More precisely, we define:
\begin{itemize}
\item Volume classes in $\OP{H}_b^n(\Homeo_0(M,\omega))$ where $M$ is a hyperbolic manifold of dimension $n$. 
\item Euler classes in $\OP{H}_b^2(\Homeo_+(S,\omega))$ where $S$ is an oriented closed hyperbolic surface.
\end{itemize}
We show that Euler classes have positive norms for any closed hyperbolic surface and volume classes have positive norms for all hyperbolic surfaces and certain hyperbolic $3$-manifolds; hence, they are non-trivial.
\end{abstract}

\maketitle
 
\section{Introduction}

Let $M$ be an oriented connected smooth manifold. Suppose $\omega$ is a finite measure induced by a volume form on~$M$.
In \cite{BM} we defined a homomorphism
$$
\Gb \colon \OP{H}_b^\bullet(\pi_1(M)) \to \OP{H}_b^\bullet(\OP{Homeo}_0(M,\omega)),
$$

where $\OP{H}_b^\bullet(\OP{Homeo}_0(M,\omega))$ is the bounded cohomology of a discrete group. The map $\Gamma_b$ is a generalization of a map defined by Gambaudo and Ghys in the quasimorphism setting \cite[Section 5]{MR2104597}. 
 
$\Gb$ was used in \cite{BM} to show that the $3^{rd}$ bounded cohomology of $\OP{Homeo}_0(M,\omega)$ is infinite dimensional for many manifolds $M$. In \cite{nitsche} more results concerning bounded cohomology, as well as standard cohomology, of $\OP{Homeo}_0(M,\omega)$ were obtained. Variations of $\Gamma_b$ were used in \cite{kimura} to prove similar results concerning $\OP{Diff}_0(S,area)$ where $S$ is a disc, sphere or torus.  

In this paper, we continue this line of research and focus on two important families of cohomology classes described below. Moreover, we construct an extension of $\Gamma_b$ to a map
$$
 \Gamma_b^\M \colon \OP{H}_b^\bullet(\MCG(M,*)) \to \OP{H}_b^\bullet(\Homeo(M,\omega)),
$$
where $\MCG(M,*)$ is the mapping class group of once punctured $M$, see Section~\ref{ss:gamma^m}. This extension is used to define the Euler class on $\Homeo_+(S,\omega)$ for an oriented closed hyperbolic surface $S$ and we hope that it might be useful to study the cohomology of $\Homeo(M,\omega)$ for other manifolds $M$.

\textbf{The bounded volume class.} Let $M$ be an oriented hyperbolic manifold and let $\omega_h$ be the volume form induced by the hyperbolic metric. 
In this setting $\omega_h$ defines a class $Vol_M \in \OP{H}_b^n(\pi_1(M)) \simeq \OP{H}_b^n(M)$, see Section \ref{ss:vol}. If $M$ is closed, $Vol_M$ is a natural bounded representative of $[\omega_h] \in \OP{H}_{dR}^n(M)$.

Our main result positively answers the question in Section 5 of \cite{BM} for degree 2 and partially for degree 3. Moreover, it may be seen as a successful attempt to define a volume class in the bounded cohomology of an infinite dimensional transformation group.

\begin{theorem}\label{t:intro}
Let $M$ be an oriented manifold of dimension $n$ such that it is either:
\begin{itemize}
    \item A hyperbolic surface with a non-abelian fundamental group or
    \item A complete $3$-dimensional hyperbolic manifold that fibers over the circle with a non-compact fiber.
\end{itemize}
Suppose a measure $\omega$ is induced by a volume form on $M$ and $\omega$ is finite. Then the class $\Gb(Vol_M) \in \OP{H}_b^n(\OP{Homeo}_0(M,\omega))$ has positive norm and hence is non-trivial.
\end{theorem}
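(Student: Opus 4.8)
The plan is to bound $\|\Gb(Vol_M)\|$ from below by pulling the class back to a subgroup on which positivity is classical. Since the sup-norm on bounded cohomology is non-increasing under pullback along any homomorphism, it suffices to produce a countable group $\Gamma$ and a homomorphism $\rho\colon\Gamma\to\Homeo_0(M,\omega)$ with $\|\rho^*\Gb(Vol_M)\|>0$. The anchor for positivity is the fundamental group itself: a hyperbolic structure makes $\wt M=\B{H}^n$ into an isometric $\pi_1(M)$-space, and $Vol_M$ is represented by the straightened volume cocycle $c_{vol}(\gamma_0,\dots,\gamma_n)=\OP{Vol}(\Delta(\gamma_0\wt x,\dots,\gamma_n\wt x))$, the signed hyperbolic volume of the geodesic simplex on the $\pi_1(M)$-orbit of a lift $\wt x$ of a basepoint. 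That $\|Vol_M\|>0$ in $\OP{H}_b^n(\pi_1(M))$ is exactly where the hypotheses enter: the non-abelian (hence non-amenable) fundamental group of a hyperbolic surface carries a Fuchsian representation $\pi_1(S)\to\OP{PSL}_2(\B{R})\subset\Homeo_+(S^1)$ whose bounded Euler class has positive norm, by the sharp Milnor--Wood inequality in the closed case and by the positivity of the norm of the bounded Euler class of the non-elementary Fuchsian representation of the free group $\pi_1(S)$ in the open case; and for the finite-volume hyperbolic $3$-manifolds in the second bullet, $\|Vol_M\|$ is proportional to the positive simplicial volume by Gromov and Thurston.

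The core problem is to transport this positivity across $\Gb$, and here I would take $\Gamma=\pi_1(M)$ and build $\rho$ from point-pushing. Unwinding the definition of $\Gb$ from \cite{BM}, the cocycle $\rho^*\Gb(c_{vol})$ is the $\omega$-integral over $x\in M$ of $c_{vol}$ evaluated on the loops traced out by $x$ under isotopies from the identity to the maps $\rho(\sigma_i)$; one must therefore arrange $\rho$ so that, on a set of positive measure, these trajectories realise the homotopy classes of a free or surface subgroup of $\pi_1(M)$. The point-pushing homomorphism $\OP{Push}\colon\pi_1(M)\to\MCG(M,*)$ of the Birman sequence $1\to\pi_1(M)\to\MCG(M,*)\to\MCG(M)\to1$ does this: each $\OP{Push}(\alpha)$ maps to an element of $\Homeo_0(M,\omega)$ after filling in the puncture, along an isotopy that sweeps the marked point around $\alpha$, and the trajectory data is governed by the standard $\pi_1(M)$-action on $\partial\wt M$. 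Organising these pushes through the extension $\Gamma_b^{\M}$ of the excerpt, the expected outcome is that $\rho^*\Gb(Vol_M)$ agrees, up to a nonzero multiplicative constant and a controllable correction, with the Fuchsian bounded volume (equivalently Euler) class that was just shown to have positive norm.

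The hard part is the measure-theoretic non-degeneracy of this integral, and it is the reason the theorem is confined to these two families. A point-push supported near a single geodesic moves only a small-mass set, and the signed volume cocycle can cancel, so one cannot hope for a literal equality $\rho^*\Gb(Vol_M)=\lambda\,Vol_M$. Instead I would prove $\|\Gb(Vol_M)\|>0$ through the duality between the bounded-cohomology sup-norm and the $\ell^1$-seminorm on homology: the orbit/classifying map associated to $\OP{Push}$ carries the fundamental cycle of $M$ to a class $z$ in the homology of $\Homeo_0(M,\omega)$ with $\|z\|_1$ bounded by the finite simplicial volume $\|M\|$, and the crux is the identity $\la\Gb(Vol_M),z\ra=\lambda\,\OP{Vol}(M)$ with $\lambda>0$, i.e. that pairing with these pushed cycles reproduces the hyperbolic volume rather than collapsing. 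Establishing this non-vanishing requires choosing the pushes to sweep disjoint fundamental-domain-sized regions — available precisely because non-abelian $\pi_1$ yields ping-pong dynamics on $\partial\B{H}^n$ — so that their contributions to the integral add rather than cancel.

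For the fibered $3$-manifold $F\hookrightarrow M\to S^1$ I would reduce to the surface computation. Non-compactness of the fiber $F$ forces $\pi_1(F)$ to be free and non-abelian, so the construction above produces point-pushing trajectories realising the subgroup $\pi_1(F)\le\pi_1(M)$; a Fubini-type integration over the generic fiber, combined with the non-triviality of the restriction of the hyperbolic volume cocycle to $\pi_1(F)$, promotes the positive lower bound on the fiber to one on $M$. The recurring delicate point — the compatibility of a finite measure $\omega$ with sweeping positive mass around homotopically non-trivial loops — is exactly what the standing hypotheses (finite $\omega$, non-abelian $\pi_1$, non-compact fiber) are there to guarantee.
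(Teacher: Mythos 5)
Your opening reduction---produce a countable group $\Gamma$ and a homomorphism $\rho\colon\Gamma\to\Homeo_0(M,\omega)$ with $\n \rho^*\Gb(Vol_M)\n>0$, then use that pullbacks do not increase the seminorm---is exactly the paper's strategy. The gap is in the construction of $\rho$. You take $\Gamma=\pi_1(M)$ and propose to build $\rho$ ``from point-pushing'', but point-pushing is a homomorphism $\pi_1(M)\to\MCG(M,*)$, not into $\Homeo_0(M,\omega)$: to get an honest group homomorphism you would have to choose, for each $\alpha\in\pi_1(M)$, an $\omega$-preserving homeomorphism representing the push so that these representatives compose \emph{strictly}, i.e.\ a measure-preserving realization of the point-pushing subgroup. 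You give no such construction, and this realization problem is known to be obstructed (Bestvina--Church--Souto: for a closed hyperbolic surface the point-pushing subgroup cannot be realized by $C^1$ diffeomorphisms fixing the point); invoking $\Gamma_b^{\M}$ does not help, since it is a map on bounded cohomology, not a device producing homomorphisms into the transformation group. The paper circumvents precisely this by working with a non-abelian \emph{free} subgroup $F<\pi_1(M)$: freeness means that sending each generator to a ``finger-pushing'' homeomorphism supported near a loop automatically defines a homomorphism, and the support structure (a ball $B$ on which the trajectory cocycle equals $w$, plus an error set $E$ of arbitrarily small measure) yields the quantitative estimate $\n\rho^*\Gamma_b(C)-\omega(B)\,i^*(C)\n\le\epsilon\n C\n$ of Lemma \ref{l:representation}, from which positivity follows at once. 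Your fallback in the third paragraph---pairing $\Gb(Vol_M)$ against a ``pushed fundamental cycle'' $z$ in the homology of $\Homeo_0(M,\omega)$---is circular, because producing $z$ from the fundamental class of $M$ already presupposes the missing homomorphism.

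Because the transport mechanism only works through free subgroups, your positivity inputs are also the wrong ones: what is needed is positivity of the \emph{restriction} $i^*(Vol_M^{gp})$ to a finitely generated free subgroup, not of $\n Vol_M\n$ on $\pi_1(M)$. For surfaces this is Lemma \ref{l:Vol_S} and Corollary \ref{c:2free} (a Mitsumatsu-type $\ell^1$-duality argument using a homologically trivial closed geodesic), applied to the \emph{open} surface $\mathbb{H}^2/F$---which is what restricting to $F$ produces even when $S$ is closed, so the Milnor--Wood inequality for the closed surface does not supply it. For the fibered $3$-manifolds the needed input is Theorem \ref{l:soma} (Soma): the fiber subgroup is a geometrically infinite, topologically tame free Kleinian group, and the volume class of such a group has positive norm; positivity of the simplicial volume of $M$, which you invoke, plays no role here and cannot feed into the free-subgroup argument, nor does the Fubini-over-fibers sketch substitute for it.
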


The class $Vol_M \in \OP{H}_b^n(M)$ was considered by Gromov and Thurston in the proof of the proportionality principle \cite{gromov82, thurston}. Moreover, it serves as a rich source for classes in $3^{rd}$ bounded cohomology of free and surface groups \cite{soma}.

It is worth mentioning that it is not known if the bounded cohomology in degree $n$ of a non-abelian free group for $n>3$ is non-trivial, hence our proof works in dimension~$2$ and sometimes in dimension $3$ so far. 
The problem of non-triviality of $\Gamma_b(Vol_M)$ for higher dimensional hyperbolic manifolds or closed hyperbolic $3$-manifolds is still open.

\textbf{The bounded Euler class.}
Let $S$ be a closed oriented surface and $\omega$ a measure defined by an area form on $S$. Denote by $\Homeo_+(S,\omega)$ the subgroup of $\Homeo(S,\omega)$ of orientation preserving homeomorphisms. 

Let $* \in S$ be a point in $S$ and $\MCG_+(S,*)$ be the orientation preserving mapping class group of $S~\backslash~\{*\}$. Recall that $\pi_1(S,*) < 
\MCG_+(S,*)$ due to the Birman exact sequence. 
As we will see in Section \ref{ss:gamma^m}, the map $\Gamma_b$ can be extended to a map $\Gamma^{\MP}_b$ such that the following diagram commutes:

$$
\begin{tikzcd}
\OP{H}_b^\bullet(\MCG_+(S,*)) \arrow{d}  \arrow{r}{\Gamma_b^{\MP}}& \OP{H}_b^\bullet(\Homeo_+(S,\omega)) \arrow{d}\\
\OP{H}_b^\bullet(\pi_1(S,*)) \arrow{r}{\Gamma_b}& \OP{H}_b^\bullet(\Homeo_0(S,\omega)) 
\end{tikzcd}
$$

The vertical arrows are induced by inclusions. 
Thus at the cost of passing to a bigger group $\MCG_+(S,*)$, we can generate classes in the group of $\omega$-preserving homeomorphisms of $S$ not necessarily isotopic to the identity. 

Let $\Homeo_+(S^1)$ be the group of orientation preserving homeomorphisms of the circle and let $e_b \in \OP{H}_b^2(\Homeo_+(S^1))$ be the bounded Euler class. 
There is a natural map $\alpha \colon \MCG_+(S,*) \to \Homeo_+(S^1)$ defined by the action of $\MCG_+(S,*) \simeq \OP{Aut}_+(\pi_1(S))$ on the Gromov boundary of $\pi_1(S)$.
Let $e_b^{\MP}$ and $e_b^S$ be the pull-backs of $e_b$ to $\MCG_+(S,*)$ and $\pi_1(S)$.

\begin{theorem}\label{t:intro2}
Let $S$ be a closed oriented surface of genus $\geq 2$, and $\omega$ a measure induced by an area form on~$S$.
Then the classes $\Gamma_b(e_b^S) \in \OP{H}_b^2(\Homeo_0(S,\omega))$
and $\Gamma_b^{\MP}(e_b^{\MP}) \in \OP{H}_b^2(\Homeo_+(S,\omega))$ have positive norms and hence are non-trivial.
\end{theorem}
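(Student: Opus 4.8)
The plan is to reduce the whole statement to a single positive lower bound for $\|\Gamma_b(e_b^S)\|$ and then transfer positivity to the mapping class group setting through the commutative square. I would first record the two inputs I use. On the one hand, the surface-group Euler class already has positive norm: since the Euler number of the Fuchsian representation satisfies $\langle e_b^S, [S]\rangle = \chi(S) \neq 0$, while the simplicial volume of the fundamental class is $\|S\|_1 = 2|\chi(S)|$, the duality estimate gives $\|e_b^S\| \geq |\langle e_b^S, [S]\rangle|/\|S\|_1 = 1/2 > 0$. On the other hand, pullback along any group homomorphism is norm non-increasing on bounded cohomology, and both $\Gamma_b$ and $\Gamma_b^{\MP}$ are norm non-increasing, being defined by integration against the finite measure $\omega$. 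The content of the theorem is therefore a \emph{lower} bound, which these upper bounds cannot supply by themselves.

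Next I would dispose of the $\MCG_+(S,*)$ statement assuming the $\Homeo_0$ statement. Consider the Birman inclusion $\pi_1(S,*) < \MCG_+(S,*)$ together with the inclusion $\Homeo_0(S,\omega) < \Homeo_+(S,\omega)$. The left vertical restriction in the commutative square sends $e_b^{\MP}$ to $e_b^S$, because $\alpha$ restricted to the point-pushing subgroup is precisely the Fuchsian boundary action defining $e_b^S$. Chasing the square, the restriction of $\Gamma_b^{\MP}(e_b^{\MP})$ to $\OP{H}_b^2(\Homeo_0(S,\omega))$ equals $\Gamma_b(e_b^S)$. Since this restriction is norm non-increasing, $\|\Gamma_b^{\MP}(e_b^{\MP})\| \geq \|\Gamma_b(e_b^S)\|$, so it suffices to bound the latter from below.

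The core step is to construct an auxiliary measure-preserving action and use it as a left inverse. I would build a homomorphism $\rho \colon \pi_1(S) \to \Homeo_0(S,\omega)$ realizing the point-pushing maps (or a closely related measure-preserving action) by genuinely $\omega$-preserving homeomorphisms isotopic to the identity, and then show that $\rho^*\Gamma_b(e_b^S) = c\cdot e_b^S$ for a nonzero constant $c$ proportional to $\omega(S)$. The mechanism is the defining formula for $\Gamma_b$: its value on a tuple of elements of $\Homeo_0(S,\omega)$ is the $\omega$-integral over $S$ of the Euler cocycle evaluated on the loops traced out by the chosen isotopies at the sampled point. For the point-pushing realization, the loop traced at $\omega$-almost every point is freely homotopic to the corresponding pushing class, so the integrand equals $\omega$-almost everywhere the Euler cocycle on the original elements of $\pi_1(S)$, up to the conjugacy that the class does not see; integration then multiplies by $\omega(S)$. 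Granting this recovery identity, norm non-increase of $\rho^*$ yields $\|\Gamma_b(e_b^S)\| \geq \|\rho^*\Gamma_b(e_b^S)\| = |c|\,\|e_b^S\| \geq |c|/2 > 0$, which completes both statements.

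I expect the main obstacle to be the construction and verification of this recovery identity. Point-pushing is a priori only well defined up to isotopy and as a mapping class, so upgrading it to an honest area-preserving \emph{group} homomorphism $\rho$ into $\Homeo_0(S,\omega)$, and then controlling, for $\omega$-almost every basepoint and every tuple, the based homotopy class of the sampled loop up to conjugacy, is the delicate point where the explicit construction of $\Gamma_b$ from the companion paper must be invoked carefully. The remaining bookkeeping, namely measurability of the integrand, independence of the auxiliary choices of isotopies and base paths, and compatibility with the identification $\OP{H}_b^2(\pi_1(S)) \simeq \OP{H}_b^2(S)$, is routine once the recovery identity is established.
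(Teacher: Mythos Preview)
Your reduction of the second claim to the first via the commutative square is exactly what the paper does, and the overall strategy of building a representation $\rho$ into $\Homeo_0(S,\omega)$ and comparing $\rho^*\Gamma_b(e_b^S)$ with $e_b^S$ is the right idea. However, the core step as you state it does not go through.

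The asserted recovery identity $\rho^*\Gamma_b(e_b^S)=c\cdot e_b^S$ with $c=\omega(S)$ is false for any point-pushing realization. A point-pushing map along a loop $\alpha$ is supported in a tubular neighbourhood of $\alpha$; for $x$ outside that neighbourhood the isotopy is constant, so $\gamma(\rho(w),x)=e$, not a conjugate of $w$. Thus the integrand in the definition of $\Gamma_b$ vanishes on most of $S$ and equals the Euler cocycle on the original tuple only on a small ball near the basepoint; on the remaining ``error'' region (the tubes minus the ball) it is uncontrolled. The statement that ``the loop traced at $\omega$-almost every point is freely homotopic to the corresponding pushing class'' is therefore incorrect. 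A second, related obstruction is that for a closed surface $\pi_1(S)$ is not free: defining $\rho$ on a generating set by finger-pushing maps does not produce a homomorphism, since the surface relation will not be satisfied in $\Homeo_0(S,\omega)$.

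The paper repairs both issues simultaneously. It first passes to a free non-abelian subgroup $i\colon F\hookrightarrow\pi_1(S)$ on which $i^*(e_b^S)$ still has positive norm (Lemma~\ref{l:euler_restriction}, proved via the rotation quasimorphism on $\pi_1(T^1S)$ rather than your duality argument on $[S]$, precisely because one needs positivity after restriction to a free subgroup). On the free group one can freely prescribe $\rho$ on generators by area-preserving finger-pushing maps. One then obtains not an exact identity but an \emph{approximate} one: $\|\rho^*\Gamma_b(C)-\Lambda\, i^*(C)\|\le\epsilon\|C\|$ with $\Lambda=\omega(B)$ the measure of a fixed ball and $\epsilon$ the measure of the error region, which can be made arbitrarily small by shrinking the tubes (Lemma~\ref{l:representation}). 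Choosing $\epsilon$ small relative to $\Lambda\|i^*(e_b^S)\|/\|e_b^S\|$ then forces $\|\Gamma_b(e_b^S)\|>0$.
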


We emphasise that in Theorem \ref{t:intro} and in Theorem \ref{t:intro2}
one can take instead of $\OP{Homeo}_0(M,\omega)$ smaller groups like $\OP{Diff}_0(M,\omega)$, or $\OP{Symp}_0(S,\omega)$ or $\OP{Ham}(S)$ whenever $S$ is a hyperbolic surface, and the same results hold. 

\textbf{On the proof and organization of the paper}. The method of the proof is a refinement (and at the same time a simplification) of the one from \cite{BM}. In the case of bounded volume classes,  
it is based on mapping a non-abelian free group $F$ to $\pi_1(M)$ and to $\OP{Homeo}_0(M,\omega)$, restricting $Vol_M$ and $\Gamma_b(Vol_M)$
to $F$ and then comparing these two classes.
An identical method is used for Euler classes. 
This technique is quite special, since we do not know many subgroups of $\OP{Homeo}_0(M,\omega)$. See \cite[Chapter 4]{mann.survey} for a survey on realizations of groups by diffeomorphisms and homeomorphisms.  

The outline of the paper is as follows. 
In Section \ref{ss:bounded.cohom} we give basic definitions of bounded cohomology. In Section \ref{ss:vol} we define two versions of the volume class: the topological $Vol_M \in \OP{H}_b^n(M)$ and its group version $Vol_M^{gp} \in \OP{H}_b^n(\pi_1(M))$. The topological version is well known and we use it in the proofs. We need the group version because $\Gamma_b$ takes classes from the group cohomology. As expected, $Vol_M$ and $Vol^{gp}_M$ define the same class under the canonical identification $\OP{H}_b^n(M) \simeq \OP{H}_b^n(\pi_1(M))$. We could not find a proof of this fact in the literature, thus for completeness we provide a detailed argument in Lemma \ref{l:group=top}.  In Section \ref{ss:euler.class} we define the Euler class in the mapping class group of a punctured surface. In Section \ref{s:definitions} we define the maps $\Gamma_b$ and $\Gamma_b^\M$. In Section \ref{s:restricting} we show that for hyperbolic surfaces and certain $3$-dimensional hyperbolic manifolds, the class $Vol_M$ restricts non-trivially to a free subgroup of $\pi_1(M)$.
The same result holds for Euler classes and closed surfaces of genus~$\geq 2$.
In Section \ref{s:proof} we give a simpler and at the same time more general version of a technical Lemma 3.1 from \cite{BM} and prove the main theorems. 
In Section \ref{s:dirac} we discuss the case when $\omega$ is the Dirac measure. 

\textbf{Acknowledgements.}
MB acknowledges the support of the Israeli Science Foundation grant 823/23. MB was partially supported by a Humboldt research fellowship. MM was supported by grant Opus 2017/27/B/ST1/01467 funded by the Narodowe Centrum Nauki. We thank the Center for Advanced Studies in Mathematics at Ben Gurion University for supporting the visit of the second author at BGU. The authors would like to thank Steve Farre for helpful discussions. 

\section{Preliminaries}\label{s:prelim}

\subsection{Bounded cohomology.}\label{ss:bounded.cohom}

Let us give the definitions of bounded cohomology of a group and a space.

Let $G$ be a~group. 
The space of bounded $n$-cochains is defined by
$$\OP{C}^n_b(G) = \{c \colon G^{n+1} \to \mathbb{R}~|~c~\text{is bounded}\}.$$
Let $d_n$ be the ordinary coboundary operator $d_n \colon  \OP{C}^n_b(G) \to \OP{C}^{n+1}_b(G)$. 
The group $G$ acts on 
$\OP{C}^n_b(G)$ by $$h(c)(g_0,\ldots,g_n) = c(h^{-1}g_0,\ldots,h^{-1}g_n) \hspace{0.4cm} \forall c \in \OP{C}^n_b(G), \hspace{0.4cm} \forall h, g_0,\ldots,g_n \in G.$$
Let $\OP{C}^n_b(G)^G$ be the space of $G$-invariant cochains.
The \textbf{bounded cohomology} of~$G$, denoted by $\OP{H}^\bullet_b(G)$, is the homology of the cochain complex $\{\OP{C}^n_b(G)^G,d_n\}$.
Note that $\OP{C}_b^n(G)^G$ is a~subcomplex of the space of all $G$-invariant cochains, hence we have a 
map $\OP{H}_b^n(G) \to \OP{H}^n(G,\mathbb{R})$ called the \textbf{comparison map}. 

On $\OP{C}^n_b(G)$ we have the supremum norm denoted by $\n\hspace{-3px}\cdot\hspace{-3px}\n$.
This norm induces a~semi-norm on $\OP{H}^n_b(G)$, i.e.,
if $C \in \OP{H}_b^n(G)$, then 
$$\n C \n = \OP{min}\{\n c \n~|~[c] = C\}.$$

Let $M$ be a topological space. 
A singular simplex is a continuous map from the standard simplex to $M$.
By $\OP{C}_n(M)$ we denote the space of singular chains and by $\OP{S}_n(M) \subset \OP{C}_n(M)$ the set of all singular simplices in $M$. 
Let $\OP{C}^n_b(M)$ be the set of linear functions from $\OP{C}_n(M)$ to the reals that are bounded on $\OP{S}_n(M)$.
Since $\OP{S}_n(M)$ generates $\OP{C}_n(M)$ 
one can think of an element in $\OP{C}_n^b(M)$ as a bounded 
function $c \colon \OP{S}_n(M) \to \mathbb{R}$. 

The \textbf{bounded cohomology} of $M$, denoted by $\OP{H}^\bullet_b(M)$, 
is the homology of the cochain complex $\{\OP{C}^n_b(M),d_n\}$, 
where $d_n \colon \OP{C}^n_b(M) \to \OP{C}^{n+1}_b(M)$ is the standard coboundary operator.
Like in the group case, we have the seminorm and the comparison map.
Sometimes it is convenient to work in the universal cover of $M$. Let us recall that on $\OP{C}^n_b(\wt{M})$ we have an 
action of $G = \pi_1(M)$ and $\OP{C}^n_b(M)$ is naturally isomorphic to $\OP{C}^n_b(\wt{M})^G$. 

We point out that bounded cohomology cannot be defined in terms of simplicial cochains (given some triangulation of $M$). 
For example, if the triangulation is finite, then every simplicial cochain is bounded and we get the standard cohomology. 

In this paper the manifold $M$ is aspherical. In this case  
$\OP{H}_b^n(\pi_1(M))$ is canonically isometric to $\OP{H}_b^n(M)$.
See \cite[Chapter 5]{frigerio} for a relatively elementary proof of this fact. It is based 
on an appropriate notion of resolution for bounded cohomology. 
Note that by the remarkable Mapping Theorem of Gromov, the assumption on asphericity of $M$ can be dropped,
but the proof of this fact is much harder \cite{gromov82}.

\subsection{The bounded volume class.}\label{ss:vol}

In this paper a hyperbolic manifold is a manifold (with or without boundary) whose all sectiontal curvatures equal $-1$. In particular we do not assume a hyperbolic manifold to be complete. Let $M$ be a connected, oriented, and aspherical hyperbolic $n$-manifold.
Below we recall the definition of $Vol_M$ 
and give a detailed description of its counterpart in the group cohomology $\OP{H}_b^n(\pi_1(M))$, which seems to be less known.
In Lemma \ref{l:group=top} we show that both versions give the same class under 
the canonical identification of $\OP{H}_b^n(M)$ and $\OP{H}_b^n(\pi_1(M))$.  

We start with defining the volume class in the cohomology of a group. Let $Iso_+(\mathbb{H}^n)$ denote the group of orientation preserving isometries of the hyperbolic $n$-space
and let $vol_h$ be the hyperbolic volume form on $\mathbb{H}^n$. 
Fix $* \in \mathbb{H}^n$ and for a tuple of elements $\bar{g} = (g_0,\ldots,g_n)$ in $Iso_+(\mathbb{H})$
consider the geodesic simplex $\Delta_{\bar{g}} \subset \mathbb{H}^n$ spanned by the points $g_0(*),\ldots,g_n(*)$. 
This simplex can be parametrized using the barycentric coordinates \cite[Chapter 6]{thurston}.
Therefore we regard $\Delta_{\bar{g}}$ as a map from the standard simplex to $\mathbb{H}^n$. 
We define

$$v(\bar{g}) = \int_{\Delta_{\bar{g}}} vol_h.$$

Note that $v(\bar{g})$ is the signed volume of $\Delta_{\bar{g}}$.
Moreover, $v$ is an $Iso_+(\mathbb{H}^n)$-invariant cocycle. 
Since volumes of geodesic simplices are bounded, $v$ is bounded and we can define:

$$Vol = [v] \in \OP{H}^n_b(Iso_+(\mathbb{H}^n)).$$

For $G < Iso_+(\mathbb{H}^n)$ we define $v_G$ to be the restriction of $v$ to $G$ and $Vol_G = [v_G] \in \OP{H}^n_b(G)$.

Recall that $M$ is a connected, oriented, and aspherical hyperbolic $n$-manifold. We allow $M$ to have cusps or a boundary that is not totally geodesic. 
Represent $M$ as a quotient $M = X/\pi_1(M)$ where 
$X \subset \mathbb{H}^n$ is contractible and $\pi_1(M)$ acts on $X$ by deck transformations. Note that the action of $\pi_1(M)$ on $X$ extends uniquely to an action on~$\mathbb{H}^n$.
Denote this action by $\rho \colon \pi_1(M) \to Iso_+(\mathbb{H}^n)$.
In what follows we usually do not mention the representation $\rho$ and 
regard $\pi_1(M)$ as a discrete subgroup of $Iso_+(\mathbb{H}^n)$.
To $M$ we associate the class 
$$Vol_{\rho(\pi_1(M))} \in \OP{H}_b^n(\pi_1(M)),$$ 
and write 
$Vol_M^{gp} = Vol_{\rho(\pi_1(M))}$. 
The action $\rho$ is well defined up to conjugacy, thus $Vol_M^{gp}$ does not depend on $\rho$. 

Let us now describe a singular cocycle that defines the class $Vol_M$ in $\OP{H}_b^n(M)$. 
Let $\sigma$ be a singular simplex in $X \subset \mathbb{H}^n$. 
The straightening $str(\sigma)$ is the geodesic simplex with the same vertices as $\sigma$ and parametrized 
using the barycentric coordinates (it is possible that $str(\sigma)$ is not contained in $X$).
We define 	

$$v'_M(\sigma) = \int_{str(\sigma)} vol_h.$$

We have that $v'_M$ is a $\pi_1(M)$-invariant cocycle on $X$. It defines 
the bounded class $[v'_M] \in \OP{H}_b^n(M)$ which we denote by $Vol_M$.

Hence we associate two classes to $M$: $Vol_M$ in the cohomology of the space $M$ and a class $Vol_M^{gp}$ in the group cohomology of $\pi_1(M)$. 

\begin{remark}
The definition of $Vol_M$ can be generalized by integrating, instead of $vol_h$, the pull-back $\tilde{\omega}$ of a closed $k$-form $\omega$ on $M^n$ for $k \leq n$, see \cite{bg,battista2023bounded}.
For a surface and $2$-forms such that $\tilde{\omega} = fvol_h$ for $f>0$, Theorem \ref{t:intro} holds with the same proof. As well, instead of a hyperbolic metric, one can take a pinched negatively curved metric on $M$ where one can straighten simplices. This should generate even more $Vol_M$-like classes.
\end{remark}

\begin{lemma}\label{l:group=top}
Let $M$ be a connected, oriented and aspherical hyperbolic $n$-dimensional manifold and let $r \colon \OP{H}_b^n(\pi_1(M)) \to \OP{H}_b^n(M)$ be the canonical isometric isomorphism. 
Then we have $r(Vol_M^{gp}) = Vol_M$.

\end{lemma}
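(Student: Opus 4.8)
The plan is to realize the canonical isomorphism $r$ explicitly at the cochain level by a $G$-equivariant chain map and to read off its effect on the volume cocycle, reducing the statement to an identity between geodesic simplices. Throughout write $G=\rho(\pi_1(M))<Iso_+(\mathbb{H}^n)$; this group is discrete and torsion-free, the latter because $\pi_1(M)$ is the fundamental group of an aspherical manifold and $\rho$ is faithful. The one genuine obstacle to a bare-hands computation is the point flagged in the text: the straightening $str(\sigma)$ of a simplex $\sigma$ in $X=\wt M$ need not lie in $X$, so geodesic coning does not directly produce a chain map into $\OP{C}_\bullet(X)$. I would circumvent this by passing to the auxiliary manifold $N=\mathbb{H}^n/G$.

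\textbf{Step 1 (reduction to $N$).} Since $G$ is discrete and torsion-free it acts freely and properly discontinuously on $\mathbb{H}^n$, so $N=\mathbb{H}^n/G$ is an oriented aspherical hyperbolic $n$-manifold with $\wt N=\mathbb{H}^n$ and $\pi_1(N)=G$. The inclusion $X\hookrightarrow\mathbb{H}^n$ descends to a map $j\colon M\hookrightarrow N$ inducing the identity on $\pi_1=G$. On $N$ the straightened volume cocycle $v'_N(\sigma)=\int_{str(\sigma)}vol_h$ is defined for singular simplices $\sigma$ in $\wt N=\mathbb{H}^n$, and now $str(\sigma)$ automatically lies in $\wt N$; it represents a class $Vol_N\in\OP{H}_b^n(N)$, while the group-cohomology class attached to $G$ is the same $Vol_G=Vol_M^{gp}$. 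Restricting cocycles along $X\hookrightarrow\mathbb{H}^n$ gives $j^*v'_N=v'_M$ on the nose, hence $j^*Vol_N=Vol_M$.

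\textbf{Step 2 (the clean comparison on $N$).} Fix the basepoint $*\in\mathbb{H}^n$ used to define $v$ and set $\Phi_\bullet\colon\mathbb{R}[G^{\bullet+1}]\to\OP{C}_\bullet(\mathbb{H}^n)$, sending $(g_0,\dots,g_n)$ to the geodesic simplex $\Delta_{\bar g}=[g_0*,\dots,g_n*]$. Because the $i$-th face of a geodesic simplex is the geodesic simplex on the remaining vertices, $\Phi$ is a chain map; it is manifestly $G$-equivariant and augmentation-preserving. By asphericity both $\OP{C}_b^\bullet(\wt N)^G$ and $\OP{C}_b^\bullet(G)^G$ are strong relatively injective $G$-resolutions of $\mathbb{R}$ (the fact cited in Section~\ref{ss:bounded.cohom}), so precomposition $\OP{Hom}_G(\Phi,\mathbb{R})$ computes the canonical isomorphism $r_N^{-1}$ and is independent of the chosen chain map. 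Since $str(\Phi(\bar g))=\Delta_{\bar g}$, we obtain $(v'_N\circ\Phi)(\bar g)=\int_{\Delta_{\bar g}}vol_h=v(\bar g)=v_G(\bar g)$, that is, $v'_N\circ\Phi=v_G$ as cochains. Therefore $r_N^{-1}(Vol_N)=Vol_G$, equivalently $r_N(Vol_M^{gp})=Vol_N$.

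\textbf{Step 3 (conclusion and main obstacle).} The canonical isomorphism $\OP{H}_b^\bullet(\pi_1(-))\cong\OP{H}_b^\bullet(-)$ is natural with respect to $\pi_1$-compatible maps, so $r_M=j^*\circ r_N$ as maps $\OP{H}_b^n(G)\to\OP{H}_b^n(M)$, using that $j$ induces the identity on $\pi_1$. Combining Steps~1 and~2, $r_M(Vol_M^{gp})=j^*\bigl(r_N(Vol_M^{gp})\bigr)=j^*(Vol_N)=Vol_M$, as desired. The genuinely delicate points, rather than the computation, are expected to be: first, verifying that precomposition with the explicit geodesic chain map $\Phi$ really computes the \emph{canonical} isometric isomorphism, which rests on the resolution formalism of Section~\ref{ss:bounded.cohom} via the fundamental lemma identifying any two $G$-morphisms of strong relatively injective resolutions up to $G$-homotopy; and second, the bookkeeping of basepoints and orientations guaranteeing $str(\Phi(\bar g))=\Delta_{\bar g}$ exactly, so that $v'_N\circ\Phi=v_G$ holds with no correction term. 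The reduction to $N$ is what makes straightening well behaved, and is to my mind the conceptual crux.
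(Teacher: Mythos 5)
Your proof is correct and follows essentially the same route as the paper: reduce to the complete quotient $N=\mathbb{H}^n/G$ (the paper's $M^{ex}$), then dualize the $G$-equivariant geodesic-simplex chain map $\bar g \mapsto \Delta_{\bar g}$ and invoke the relatively injective strong resolution formalism to identify the resulting cochain map with $r^{-1}$, noting that it carries the straightened volume cocycle to $v_G$ on the nose. The only cosmetic difference is that the paper performs the reduction via Whitehead's theorem (the inclusion $M \hookrightarrow M^{ex}$ is a homotopy equivalence inducing an isometric isomorphism in bounded cohomology), whereas you phrase it as naturality of $r$ along the map $j$ inducing the identity on $\pi_1$.
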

\begin{proof}

Let $M = X/\pi_1(M)$ where $X \subset \mathbb{H}^n$.
First, we shall show that without loss of generality, we can assume that $X = \mathbb{H}^n$. Consider $M^{ex}= \mathbb{H}^n/\pi_1(M)$. 
Since $X~\subset~\mathbb{H}^n$, we have that $M$ is a submanifold of $M^{ex}$. Moreover, $M$ is aspherical and, by the Whitehead theorem, the inclusion $i \colon M \to M^{ex}$ is a homotopy equivalence. Hence 
$$i^* \colon \OP{H}_b^n(M^{ex}) \to \OP{H}_b^n(M)$$ 
is an isometric isomorphism. 
Moreover, $i^*(Vol_{M^{ex}}) = Vol_M$. Thus instead of $M$, we can consider $M^{ex}$. 
In other words, we can assume that $X = \mathbb{H}^n$.

The explicit formula for $r$ can be found in \cite[Lemma 5.2]{frigerio}
(note that on the standard cohomology, $r$ is just the map induced by the classifying map).
We shall give a formula for the inverse of $r$. 

Let $G = \pi_1(M)$ and $* \in \mathbb{H}^n$ be a basepoint.  
Let $\Delta$ be the map which to each tuple $\bar{g} = (g_0, \ldots, g_n) \in G^{n+1}$ 
associates $\Delta_{\bar{g}}$, 
the geodesic simplex spanned by the $g_i(*)$ and parametrized by barycentric coordinates. 

Consider the augmented cochain complexes $\{\OP{C}^\bullet_b(G),d\}$ and $\{\OP{C}_b^\bullet(\mathbb{H}^n)),d\}$.
It means that $\OP{C}^{-1}_b(G) = \OP{C}_b^{-1}(\mathbb{H}) = \mathbb{R}$ and in both cases $d_{-1}$ maps a real number to a constant function.  
The map $\Delta$ commutes with taking facets. 
That is, if $\bar{f} \subset \bar{g}$ is an $n$-tuple in an $(n+1)$-tuple $\bar{g}$, 
then $\Delta_{\bar{f}}$ is just $\Delta_{\bar{g}}$ restricted to the corresponding facet. 
Thus $\Delta$ induces a map of augmented cochain complexes 

$$\Delta^* \colon \{\OP{C}_b^\bullet(\mathbb{H}^n)),d\} \to \{\OP{C}^\bullet_b(G),d\}$$

given by $\Delta^*(c)(\bar{g}) = c(\Delta_{\bar{g}})$ and the identity on the augmentations. 
The map $\Delta^*$ is $G$-invariant and since these resolutions are relatively injective strong resolutions 
\cite[Lemma 4.12 and Lemma 5.4]{frigerio}, 
$\Delta^*$ induces a map 
$$\OP{H}^n_b(\Delta) \colon \OP{H}_b^n(M) \to \OP{H}_b^n(\pi_1(M))$$ 
which is the inverse of $r$ \cite[Theorem 4.15]{frigerio}. 
Since $r$ is an isometric isomorphism, the same holds for $\OP{H}^n_b(\Delta)$.
Moreover, it follows directly from the definitions that $\Delta^*(v'_M) = v_G$.
Thus $r(Vol_M^{gp})~=~Vol_M$. 
\end{proof}

Let $\omega_h$ be the hyperbolic volume form on a closed hyperbolic manifold $M$. 
Let us point out that $[v'_M] \in \OP{H}_b^n(M)$ goes to $[\omega_h] \in \OP{H}_{dR}(M) \simeq H^n(M)$ after applying the comparison map. 
This can be seen by the straightening of simplices homotopy, see \cite{thurston} or \cite[Lemma 8.12]{frigerio}. 

\subsection{Euler class in the mapping class group}\label{ss:euler.class}

Let $\Homeo_+^{\mathbb{Z}}(\mathbb{R})$ be the set of orientation preserving homeomorphisms $f$ of $\mathbb{R}$ that are lifts of maps from $\Homeo_+(S^1)$. That is, $f(x+1) = f(x) + 1$. It fits the central extension
$$
\mathbb{Z} \xrightarrow{} \Homeo_+^{\mathbb{Z}}(\mathbb{R}) \xrightarrow{p} \Homeo_+(S^1).
$$

The Euler class $e_b \in \OP{H}_b^2(\Homeo_+(S^1))$ is a particular bounded class representing the Euler class of this extension \cite[Chapter 10]{frigerio}. Let $S$ be a closed oriented surface of genus $\geq 2$. On $S$ we fix a hyperbolic metric. 
Let $\MCG_+(S,*)$ be the subgroup of $\MCG(S,*)$ of mapping classes represented by orientation preserving homeomorphisms. 
By the Dehn-Nielsen theorem $\MCG_+(S,*) \simeq \OP{Aut}_+(\pi_1(S,*))$, thus $\MCG_+(S,*)$ acts on the Gromov boundary $\partial \pi_1(S,*) \simeq S^1$. Hence we have a map 
$$
\alpha \colon \MCG_+(S,*) \to \Homeo_+(S^1).
$$
In geometric terms, $\alpha$ is described as follows. Let $f \in \Homeo_+(S,*)$ represent an element $\psi \in \MCG_+(S,*)$. Let $\tilde{*} \in \mathbb{H}^2$ be a fixed preimage of $*$ and let $\tilde{f}$ be the lift of $f$ such that $\tilde{f}(\tilde{*}) = \tilde{*}$. Now $\alpha(\psi)$ is the action of $\tilde{f}$ on $\partial \mathbb{H}^2$ and it does not depend on the choice of $f$ representing $\psi$.
Note that $\alpha$ restricted to $\pi_1(S,*) < \MCG_+(S,*)$ is just the standard action on $\partial \mathbb{H}^2$ by deck transformations.  

Let $e_b^{\MP} = \alpha^*(e_b)$ and $e_b^S$ be the restriction of $e_b^{\MP}$ to $\pi_1(S,*) < \MCG_+(S,*)$. The class $e_b^{\MP}$ was studied in \cite{Chen,JekelRolland}.

\section{Definitions of $\Gb$ and $\Gb^\M$}\label{s:definitions}

Suppose $M$ is a connected smooth manifold and $\omega$ is a finite measure induced by a volume form. In this section, we define the maps $\Gb$ and $\Gb^\M$ (the latter for compact $M$). We start with a geometrically motivated definition of $\Gb$ by a system of paths. This definition naturally leads to the definition of $\Gb^\M$ by a system of homeomorphisms. To show that the definitions of $\Gb$ and $\Gb^\M$ do not depend on the chosen systems, we use a result from \cite{nitsche}. To this end, we rephrase our definitions in the language of couplings. In the case of $\Gb$ the obvious coupling is given by the universal cover. For the convenience of the reader, we give the details in Section \ref{ss:pi-coupling}. It turns out that $\Gb^\M$ can be described in the language of couplings as well. However, one needs to use a bigger (and therefore disconnected) cover of $M$, see Section \ref{ss:m-coupling}.

Fix a basepoint $* \in M$. In this section, we assume that $\pi_1(M,*)$ is center free. It holds for all manifolds we are interested in and with this assumption the construction of $\Gb$ and $\Gb^\M$ is slightly simpler. 

Let $p \colon \wt{M} \to M$ be the universal cover of $M$. We view an element $l \in p^{-1}(x)$ as a homotopy class relative to $\{*,x\}$ of a path connecting $*$ to $x$. The action of $\pi_1(M,*)$ on $\wt{M}$ is given by concatenating a loop representing $\gamma \in \pi_1(M,*)$ and a path representing $l \in \wt{M}$. 

Recall that $\OP{Homeo}(M)$ is the group of all homeomorphisms of $M$ and $\OP{Homeo}_0(M)$ are those elements of $\OP{Homeo}(M)$ that are isotopic to the identity of $M$. Similarly, $\OP{Homeo}(M,\omega)$ is the group of all homeomorphims of $M$ preserving $\omega$ and $\OP{Homeo}_0(M,\omega)$ are those elements of $\OP{Homeo}(M,\omega)$ that are isotopic to the identity of $M$ via $\omega$-preserving maps. 

In all these transformation groups, we consider their right group action on $M$, so that $fg$ acts by $(fg)(x) = g(f(x))$ for all $x \in M$. We are forced to use this convention, since later in Section \ref{ss:gamma^m} we relate elements of $\pi_1(M,*)$ to homeomorphisms in $\Homeo(M)$ using a push map, and we want the multiplication in $\pi_1(M,*)$ to be compatible with how we compose homeomorphisms.  

Recall that a left action of a group $G$ on a set $X$
is a homomorphism $G \to \OP{Aut}(X)$ and a right action is an antihomomorphism $G \to \OP{Aut}(X)$. On $\OP{Aut}(X)$ we compose maps starting from the right-most. In this way we have a left action of $\pi_1(M,*)$ on $\wt{M}$ and a right action of $\OP{Homeo}(M)$ on $M$.

\subsection{Description of $\Gb$}\label{SS:gamma}

In this paragraph, we give the simplest, in our opinion, description of $\Gb$.
A more refined and general definition can be found in \cite{BM}.
Another approach based on coupling of groups is given in \cite{nitsche}. Note that the definition in \cite{nitsche} is much more general and works for measurable spaces and groups of measurable transformations.

Let $s_x$ be a path  connecting $*$ to~$x \in M$. By $[s_x]$ we denote the homotopy class of $s_x$ relative to the endpoints $\{*,x\}$ and by $\bar{s}_x$ we denote the reverse of $s_x$. A~\textbf{system of paths} for $M$ is a function on $M$ of the form $S(x) = [s_x]$ where $s_x$ is any path connecting $*$ to $x$.

Assume $S$ is a system of paths. We shall define a map 
$$ \gamma \colon \OP{Homeo}_0(M,\omega) \times M \to \pi_1(M,*).$$

Let $S(x)$ be represented by a path $s_x$. Fix $f \in \OP{Homeo}_0(M,\omega)$ and $f_t$ an isotopy connecting $Id_M$ to $f$.
We define $\gamma(f,x) \in \pi_1(M,*)$ to be the homotopy class of the loop based at $*$ which is the concatenation of $s_x$, the trajectory $f_t(x)$ and $\bar{s}_{f(x)}$.
The element $\gamma(f,x)$ does not depend on the choice of paths representing $S(x)$ and $S(f(x))$. Moreover, $\gamma(f,x)$ does not depend on the isotopy $f_t$. Indeed, it follows from \cite[Proposition 3.1]{BM} that changing the isotopy $f_t$ to another isotopy connecting $Id_M$ and $f$ would change $\gamma(f,x)$ by an element of the center of $\pi_1(M,*)$. Since we assumed that the center of $\pi_1(M,*)$ is trivial, $\gamma(f,x)$ is well defined. 
Note that $\gamma$ depends on the choice of the system of paths $S$. 

It is a simple calculation, that $\gamma$ satisfies a cocycle condition in the following form:
$$\gamma(fg,x) = \gamma(f,x)\gamma(g,f(x)).$$ 

 Define $\wt{\omega}$ to be the measure on $\wt{M}$ induced by the pull-back of the volume form on $M$ that defines $\omega$. Since elements of $\wt{M}$ are homotopy classes relative to the endpoints of paths in $M$ starting at $*$, the image $im(S)$ is a subset of $\wt{M}$.
We say that $S$ is \textbf{measurable} if $im(S)$ is $\wt{\omega}$-measurable. 
In Section \ref{ss:pi-coupling} we show that measurable systems of paths exist. 

Now assume $S$ is a measurable system of paths. For each $n$ we define 
$$\Gb \colon \OP{H}^n_b(\pi_1(M,*)) \to \OP{H}^n_b(\OP{Homeo}_0(M,\omega))$$ 
to be the map induced by the following function defined on cochains (called again~$\Gb$):

\[\Gb(c)(f_0,\ldots,f_n) = \int_M c\big(\gamma(f_0,x),\ldots,\gamma(f_n,x)\big) d\omega(x) 
.\]

In Section \ref{ss:pi-coupling} we show that the function under the integral is measurable and that~$\Gamma_b$ does not depend on the choice of a measurable system of paths.

\begin{remark}
    In \cite{BM} the group $\OP{Homeo}_0(M,\omega)$ is defined like in this paper with an additional assumption that homeomorphisms have compact support. This additional assumption was needed to define an analog of $\Gamma_b$ for the ordinary cohomology in \cite[Section 3.4]{BM}. In this paper we deal only with the bounded cohomology, therefore we do not assume that elements of $\OP{Homeo}_0(M,\omega)$ are compactly supported. Moreover, contrary to \cite{BM}, here we do not assume that $M$ carries a complete Riemannian metric. 
\end{remark}

\begin{example}
Now we explicitly describe a cocycle representing the element $\Gb(Vol_M^{gp})$. 
Let $M = X/\pi_1(M)$ where $X \subset \mathbb{H}^n$ and let $\omega$ be any finite measure induced by a volume form on $M$. Assume that $\pi_1(M)$ has trivial center (equivalently, $\pi_1(M)$ is not abelian, for example $M$ is not a quotient of $\mathbb{H}^n$ by a $\mathbb{Z}$-action). 
Let $\Tilde{*} \in X$ be a lift of the basepoint $*$ and $f_0,\ldots,f_n \in \OP{Homeo}_0(M,\omega)$. 
For every $x \in M$ we get a tuple of elements $\gamma(f_0,x), \ldots , \gamma(f_n,x)$ in $\pi_1(M)$ that act on~$X$. Consider the points $v_i(x) = \gamma(f_i,x)\Tilde{*}$ and 
span a geodesic simplex $\Delta(x)$ in~$X$ with vertices $v_i(x)$. 
Now $\Gb(Vol_M^{gp})$ is represented by a cocycle that assigns to $(f_0,\ldots,f_n)$ the average signed volume of $\Delta(x)$ over $M$ with respect to~$\omega$. 
\end{example}

\subsection{$\Gamma_b$ via coupling}\label{ss:pi-coupling}
Let us describe the construction of $\Gamma_b$ via couplings given in \cite{nitsche}. 
A coupling is a measured space $(X,\mu)$ together with a $\mu$-preserving left action of a group $\Gamma$ and a commuting $\mu$-preserving right action of a group $\Lambda$. Suppose that $\Gamma$ is countable, the action of $\Gamma$ is free and $F \subset X$ is a strict measurable fundamental domain for an action of $\Gamma$. A map $\chi \colon X \to \Gamma$ is defined by the equation $\chi(\gamma.x)=\gamma$ for $x \in F$ and $\gamma \in \Gamma$. Note that $\chi$ is measurable, i.e., the preimage of every element in $\Gamma$ is a measurable set. A function $\bar{\chi} \colon \Lambda \times F \to \Gamma$ given by $\bar{\chi}(\lambda,x) = \chi(x.\lambda)$ induces a homomorphism $tr_\Gamma^\Lambda X \colon \OP{H}_b^\bullet(\Gamma) \to \OP{H}_b^\bullet(\Lambda)$  by the formula 
\cite[Section 3]{nitsche} 

\[tr_\Gamma^\Lambda(c)(\lambda_0,\ldots,\lambda_n) = \int_F c\big(\bar{\chi}(\lambda_0,x),\ldots,\bar{\chi}(\lambda_n,x)\big)d\mu(x).\]

The map $\bar{\chi}$ is a composition of measurable functions, hence the function under the integral is measurable. Moreover, $tr_\Lambda^\Gamma X$ does not depend on the choice of $F$ \cite[Lemma 3.3]{nitsche}.

Let $f \in \Homeo_0(M,\omega)$ and suppose $f_t$ is an isotopy connecting the identity of~$M$ to $f$. Denote by $\wt{f}$ the endpoint of a lift of $f_t$ to $\wt{M}$. Note that, again by \cite[Proposition 3.1]{BM}, and by the assumption that $\pi_1(M,*)$ has a trivial center, $\wt{f}$ does not depend on the chosen isotopy $f_t$. Therefore we have a right $\wt{\omega}$-preserving action of $\Homeo_0(M,\omega)$ on $\wt{M}$, given by $x.f = \wt{f}(x)$,  $x \in \wt{M}$. Note that $x.f$ is represented by a path connecting $*$ to $x$, and then following the trajectory from $x$ to $f(x)$ given by $f_t(x)$. 

Now we consider the following coupling: $X = (\wt{M},\wt{\omega})$ together with the left action of $\Gamma = \pi_1(M,*)$ and the right action of $\Lambda = \Homeo_0(M,\omega)$. For this coupling, we have $\Gb = tr_\Gamma^\Lambda X$. Indeed, if $S$ is a measurable system of paths, $im(S)$ is a strict measurable fundamental domain of the $\pi_1(M,*)$-action, and vice versa, every measurable fundamental domain defines a measurable system of paths. Moreover, let $S$ be a measurable system of paths and $F=im(S)$. Recall that $p \colon \wt{M} \to M$ is the universal covering map. If $\gamma$ and $\bar{\chi}$ are defined by $S$ and $F$ respectively, then $\gamma(f,p(x)) = \bar{\chi}(f,x)$ for every $x \in F$ and $f \in \Homeo_0(M,\omega)$. Thus $\Gb$ and $tr_\Gamma^\Lambda X$ coincide. In particular, $\Gb$ does not depend on the choice of a measurable system of paths.

To finish the construction of $\Gb$, we shall show that measurable fundamental domains exist. Suppose $\{U_i\}_{i\in \mathbb{N}}$ is a cover of $M$ by simply connected open sets. Let $\wt{U}_i$ be a homeomorphic lift of $U_i$ to $\wt{M}$. Set $F_i = \wt{U}_i \backslash p^{-1}(\cup_{j=1}^{i-1} U_j)$. Then $F = \cup_{j=1}^\infty F_j$ is a strict measurable fundamental domain.

\subsection{Description of $\Gamma_b^\M$.}\label{ss:gamma^m}

We describe an extension of $\Gamma_b$ to a map that ranges in the bounded cohomology of $\Homeo(M,\omega)$. 
It is done by modifying the notion of a system of paths. In this and the next subsection, we assume that $M$ is compact.

Fix a point $* \in M$. Let $\Homeo(M,*)$ be the group of homeomorphisms of $M$ fixing $*$ and $\Homeo_0(M,*)$ the subgroup of homeomorphisms that are isotopic to the identity by $*$-fixing homeomorphisms. It follows from \cite[Corollary 1.1]{edwards-kirby}, that for compact $M$, $\Homeo_0(M,*)$ is the connected component of the identity in $\Homeo(M,*)$ equipped with the compact-open topology. Recall, that in these groups we compose homeomorphisms from left to right.  
Consider the mapping class group 
$$\MCG(M,*) = \Homeo(M,*)/\Homeo_0(M,*).$$ 
Let $\Homeo(M,*\to x) < \Homeo(M)$ be the subset of homeomorphisms of $M$ that send $*$ to $x$. Suppose $h_x \in \Homeo(M,*\to x)$.
Denote by $[h_x]$ the connected component of $h_x$ in $\Homeo(M,*\to x)$ with the compact-open topology. Note that two elements $h_x$ and $g_x$ in $\Homeo(M,*\to x)$ are in the same connected component if and only if there exists an isotopy $f_t$ connecting $h_x$ to $g_x$ such that $f_t(*) = x$ for all $t$. A~\textbf{system of homeomorphisms} for $M$ is a function on $M$ of the form $P(x) = [h_x]$, where $h_x \in \Homeo(M,*\to x)$.
For example, it can be a point-pushing map along~$s_x$, where $S(x) = [s_x]$ is a system of paths. 

Let $P(x) = [h_x]$ be a system of homeomorphisms. For every $f \in \Homeo(M,\omega)$, we have $h_x \circ f \circ h_{f(x)}^{-1} \in \Homeo(M,*)$. We define a cocycle 
$$
\gamma^\M \colon \Homeo(M,\omega) \times M \to \MCG(M,*)
$$
by $\gamma^\M(f,x) = [h_x \circ f \circ h_{f(x)}^{-1}]$. Note that $\gamma^\M$ does not depend on the homeomorphisms $h_x$ representing $P(x)$ but depends on the choice of $P$. There is a notion of a measurable system of homeomorphisms, we discuss it in Section \ref{ss:m-coupling}.

Likewise in Subection~\ref{SS:gamma}, any measurable system of homeomorphisms $P$ induces a map 
$$
\Gamma_b^\M \colon \OP{H}_b^n(\MCG(M,*)) \to \OP{H}_b^n(\Homeo(M,\omega)).
$$

In Section \ref{ss:m-coupling} we show that $\Gb^\M$ does not depend on the choice of $P$. 
Note that $\Gamma_b^\M$  generalizes the homomorphism $\mathcal{G}_{S,1}$ from \cite{BMentropy} defined only for surfaces $S$ and ranging in quasimorphisms on $\Homeo(S,\omega)$.

\begin{remark}
If $M$ is oriented, we can consider the oriented version of $\Gamma_b^\M$.
Namely, let $\MCG_+(M,*)$ be the subgroup of $\MCG(M,*)$ of mapping classes represented by orientation preserving homeomorphisms and let $\Homeo_+(M,\omega)$ be homeomorphisms preserving $\omega$ and the orientation. Assuming that for every $x\in M$, $P(x)$ is represented by an orientation preserving homeomorphism, we have $\gamma^\M(f,x) \in \MCG_+(M,*)$ for $f \in \Homeo_+(M,\omega)$. Thus we can define 
$$
\Gamma_b^\MP \colon \OP{H}_b^n(\MCG_+(M,*)) \to \OP{H}_b^n(\Homeo_+(M,\omega)).
$$
This map is used in Theorem \ref{t:intro2}.
\end{remark}

Let us now explain in what sense $\Gamma_b^\M$ extends $\Gamma_b$.
Note that $\MCG(M,*) = \pi_0(\Homeo(M,*))$ and $\MCG(M) = \pi_0(\Homeo(M))$. 
Consider the fiber bundle

$$\Homeo(M,*) \xrightarrow{} \Homeo(M) \xrightarrow{ev} M,$$

where $ev(f) = f(*)$. The long exact sequence of the homotopy groups gives
$$
\pi_1(\Homeo(M)) \xrightarrow{ev_1} \pi_1(M,*) \xrightarrow{Pu} \MCG(M,*) \xrightarrow{F} \MCG(M).
$$ 

For $g \in \pi_1(M,*)$, $Pu(g)$ is the mapping class represented by the time-one map $f_1$ of a point-pushing isotopy $\{f_t\}_{t\in[0,1]}$ along a loop representing $g$. By \cite[Proposition 3.1]{BM}, $im(ev_1)$ lies in the center of $\pi_1(M,*)$, therefore $ev_1$ is trivial and $Pu$ is an embedding. 

Let $[f] \in im(Pu) = ker(F) < \MCG(M,*)$ and let $f_t$ be an isotopy connecting the identity to $f$ in $\Homeo_0(M)$. Let $Tr([f])$ be the element of $\pi_1(M,*)$ represented by $f_t(*)$. By the triviality of $ev_1$, this map is well defined, and it is the inverse of $Pu$ on $im(Pu)$. 

Let $S$ be any measurable system of paths and $P$ a system of homeomorphims such that $P(x) = [h_x]$ with $h_x$ the point-pushing map along a path $s_x$ such that $S(x) = [s_x]$. The cocycles $\gamma$ and $\gamma^\M$ are defined with respect to these systems. For every $f \in \Homeo_0(M,\omega)$ we have $\gamma(f,x) = Tr(\gamma^\M(f,x))$.
Thus $Pu(\gamma(f,x)) = \gamma^\M(f,x)$ and we have a commutative diagram:

$$
\begin{tikzcd}
\Homeo(M,\omega) \times M  \arrow{r}{\gamma^\M}& \MCG(M,*) \\
\Homeo_0(M,\omega) \times M \arrow{u}\arrow{r}{\gamma}& \pi_1(M,*) \arrow{u}{Pu}.
\end{tikzcd}
$$

Thus the following diagram commute:

$$
\begin{tikzcd}
\OP{H}_b^\bullet(\MCG(M,*)) \arrow{d}{Pu^*}\arrow{r}{\Gamma_b^\M}& \OP{H}_b^\bullet(\Homeo(M,\omega)) \arrow{d}\\
\OP{H}_b^\bullet(\pi_1(M,*)) \arrow{r}{\Gamma_b}& \OP{H}_b^\bullet(\Homeo_0(M,\omega)).
\end{tikzcd}
$$

\subsection{$\Gb^\M$ via couplings}\label{ss:m-coupling}

We shall construct a cover of $M$ on which $\MCG(M,*)$ acts on the left and $\Homeo(M,\omega)$ acts on the right.

Recall that $ev \colon \Homeo(M) \to M$ is a fibre bundle defined by $ev(f) = f(*)$.
Note that $\Homeo(M,* \to x) = ev^{-1}(x)$.
Denote by $\wt{M}^\M$ the set of connected components of the fibers of~$ev$, and by $q \colon \Homeo(M) \to \wt{M}^\M$ the quotient map. On $\wt{M}^\M$ we consider the quotient topology. Note that we have $q(f) = q(g)$ if and only if $f$ can be connected to $g$ via an isotopy
preserving $f(*)=g(*)$ at all times.

The map $ev$ factors via $q$:

$$
\begin{tikzcd}
\Homeo(M) \arrow{d}{q}\arrow{r}{ev}& M\\
\wt{M}^\M \arrow{ur}{p^\M}&
\end{tikzcd}
$$

where $p^\M$ is the unique map making the above diagram commutative. 

The evaluation map is a fiber bundle, thus every $x \in M$ has an open neighborhood $U$ such that $ev^{-1}(U)$ is homeomorphic to $ U \times \Homeo(M,*)$ by a fiber preserving homeomorphism. Moreover, by the definition of $q$, we have that $q(ev^{-1}(U))$ is the disjoint union of copies of $U$. On each such a copy $p^\M$ is a homeomorphism. Thus $p^\M$ is a covering map. 

Denote by $\wt{\omega}^\M$ the measure on $\wt{M}^\M$ defined by the pull-back of the volume form on $M$ that defines $\omega$. We shall define the mentioned left and right actions. 

Let $[c] \in \wt{M}^\M$ be a connected component of $ev^{-1}(c(*))$ represented by $c \in \Homeo(M)$, and $[h] \in \MCG(M,*)$ be a mapping class represented by $h \in \Homeo(M,*)$. The left action of $\MCG(M,*)$ on $\wt{M}^\M$ is given by $[h].[c] = [hc] \in \wt{M}^\M$ (note that we compose homeomorphisms starting from the left), where $[hc]$ denotes the connected component of $ev^{-1}(c(*))$ containing $hc$. This action is transitive on the fibers of $p^\M$ and permutes the components of $q(ev^{-1}(U))$, where $U$ is as above. Therefore it is a deck-transformation group of $
\wt{M}^\M$ and preserves $\wt{\omega}^\M$. 

Let $[c] \in \wt{M}^\M$ and $f \in \Homeo(M,\omega)$. The right action of $\Homeo(M,\omega)$ on $\wt{M}^\M$ is given by $[c].f = [cf]$, where $[cf]$ is the connected component of $ev^{-1}(f(c(*))$ containing $cf$. This action covers the $\Homeo(M,\omega)$-action on $M$ and thus is $\wt{\omega}^\M$-preserving. 

The above actions define a coupling on $\wt{M}^\M$. Moreover, by construction, if $P$ is a system of homeomorphisms, $im(P)$ is a subset of $\wt{M}^\M$. We say that a \textbf{system of homeomorphisms $P$ is measurable} if $im(P)$ is $\wt{\omega}^\M$-measurable.

Every measurable system of homeomorphisms defines a strict fundamental domain for the $\MCG(M,*)$-action and vice versa. It follows directly from definitions (see Section \ref{ss:pi-coupling}) that the constructions described in Section \ref{ss:gamma^m} and \cite{nitsche} coincide. In particular, $\Gb^\M$ does not depend on the choice of a measurable system of paths \cite[Lemma 3.3]{nitsche}. 

Note that $\wt{M}^\M$ is disconnected and contains the universal cover of $M$. Indeed, 
let $\wt{M}_0$ be the subset of $\wt{M}^\M$ containing classes that are represented by homeomorphisms isotopic to the identity in $\Homeo(M)$. The subgroup $\pi_1(M,*) < \MCG(M,*)$ acts on $\wt{M}_0$ and the quotient is $M$, thus $\wt{M}_0$ is the universal cover of $M$. An explicit isomorphism between $\wt{M}_0$ and $\wt{M}$ is given by the map $T \colon \wt{M}_0 \to \wt{M}$ where $T([c])$ is the homotopy class of the path $c_t(*)$ traced by any isotopy $c_t$ connecting~$Id_M$ to~$c$. By the triviality of $ev_1 \colon \pi_1(\Homeo(M)) \to \pi_1(M,*)$, this map is well defined. Therefore, $\wt{M}^\M$ consists of infinitely many copies of $\wt{M}$ indexed by the right cosets of $\pi_1(M,*)$ in $\MCG(M,*)$.

Finally, we show that measurable systems of homeomorphisms exist. It follows from the existence of measurable systems of paths. Let $S(x) = [s_x]$ be a measurable system of paths and let $P(x) = [h_x]$, where $h_x$ is a point-pushing map along~$s_x$. Using the isomorphism $T \colon \wt{M}_0 \to \wt{M}$ we can regard $im(S)$ as a $\wt{\omega}^\M$-measurable subset of $\wt{M}_0$, and under this identification $im(S) = im(P)$. Thus $P$ is a measurable system of homeomorphisms.   

\begin{remark}
    Set $\pi = \pi_1(M,*)$. The cover $\wt{M}^\M$ is isomorphic to $(\MCG(M,*) \times \wt{M})/_\pi$, where $\pi$ acts on $\MCG(M,*) \times \wt{M}$ by $\gamma.(h,x) = (h\gamma^{-1},\gamma.x)$.
\end{remark}

\section{Restriction to a free subgroup}\label{s:restricting}

In this section, we find a free subgroup $F$ of $\pi_1(M,*)$ such that volume and Euler classes restricted to $F$ have positive norms. 

\subsection{Volume class in dimension 2}\label{ss:dim2}
Let $X$ be a topological space. The $l^1$-homology of $X$ is denoted by $\OP{H}^{l_1}_n(X)$ \cite[Chapter 6]{frigerio}.
In $l^1$-homology we allow chains to be infinite sums $c = \Sigma_i^\infty a_i\sigma_i$,
where $\n c \n = \Sigma_i^\infty |a_i| < \infty$.
As usual, the norm on chains induces the norm on $\OP{H}^{l_1}_n(X)$. 
We have a Kronecker product between $l^1$-homology and bounded cohomology:
$$\langle \cdot , \cdot \rangle \colon \OP{H}_b^n(X) \times \OP{H}_n^{l_1}(X) \to \B R.$$
The Kronecker product is defined on the level of chains by 
$\langle b, a \rangle = \Sigma_i^\infty a_ib(\sigma_i)$ 
where $a = \Sigma_i^\infty a_i\sigma_i$ and $b$ is a bounded cochain. 
Moreover we have $| \langle B, A \rangle | \leq \n B \n \n A \n$ where 
$B \in \OP{H}_b^n(X)$ and $A \in \OP{H}_n^{l_1}(X)$.
The following lemma is a variation of a result obtained in \cite{mitsumatsu}. We do not assume that $S$ is closed.  

\begin{lemma}\label{l:Vol_S}
Let $S$ be an oriented hyperbolic surface with non-abelian fundamental group. 
Then $Vol_S$ has a positive norm. 
\end{lemma}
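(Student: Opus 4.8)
The plan is to bound $\n Vol_S \n$ from below by restricting to a well-chosen free subgroup and then pairing against an explicit $l_1$-cycle. Since $\pi_1(S)$ is non-abelian, the Fuchsian group $\Gamma = \rho(\pi_1(S)) < Iso_+(\mathbb{H}^2)$ is non-elementary, so it contains two elements $a,b$ with distinct axes generating a free subgroup $F = \la a,b\ra$ of rank $2$. The restriction map $\OP{H}_b^2(\pi_1(S)) \to \OP{H}_b^2(F)$ is norm non-increasing, and by the very definition of $v$ as the signed-area cocycle on $Iso_+(\mathbb{H}^2)$ it carries $Vol_S^{gp}$ to the area class $Vol_F$ of the $F$-action on $\mathbb{H}^2$. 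Hence it suffices to prove $\n Vol_F \n > 0$, which by Lemma \ref{l:group=top} is the same as bounding the topological class from below.

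To detect $Vol_F$ I would exploit the geometry of $F$. A finitely generated Fuchsian group is geometrically finite, so the convex core $\Sigma$ of $\mathbb{H}^2/F$ is a finite-area hyperbolic surface with $\chi(\Sigma) < 0$ whose boundary consists of finitely many closed geodesics and cusps and whose fundamental group is still $F$. Straightening an (ideal) triangulation of $\Sigma$ produces a relative fundamental cycle of finite $l_1$-norm; because the boundary components are circles, hence have amenable fundamental group, Gromov's equivalence principle (the point adapted from \cite{mitsumatsu}) lets one close this up to an honest fundamental $l_1$-cycle $z \in \OP{H}_2^{l_1}(\Sigma)$ with $\n z \n < \infty$. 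Pushing $z$ forward to $S$ through $F \hookrightarrow \pi_1(S)$ and pairing with the straight cocycle $v'$ gives, by Gauss–Bonnet,
$$\la Vol_S, z \ra = \int_\Sigma vol_h = \OP{Area}(\Sigma) = -2\pi\chi(\Sigma) > 0,$$
since each straightened simplex of $z$ contributes its signed hyperbolic area and these sum to the total area.

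The Kronecker inequality $|\la Vol_S, z\ra| \le \n Vol_S \n \cdot \n z \n$ then forces $\n Vol_S \n \ge \OP{Area}(\Sigma)/\n z \n > 0$, proving the lemma. An equivalent route bypasses the cycle: as $F$ is free, $\OP{H}^2(F,\B R) = 0$, so $Vol_F$ lies in the kernel of the comparison map and is the coboundary of a homogeneous quasimorphism $\phi$ on $F$ with $\n Vol_F \n = \tfrac12 D(\phi)$; one then shows $\phi$ is unbounded by tracking the accumulated signed areas of geodesic triangles along the powers of a hyperbolic element, again using the bound $|v| \le \pi$. I expect the genuine obstacle to be exactly the construction of the detecting object in the non-closed case: when $S$ has funnels there is no fundamental class at all, so one must honestly manufacture a finite-$l_1$-norm cycle (or an unbounded quasimorphism) that sees the straightened area, and verify that capping the amenable boundary does not annihilate the pairing. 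Once the cycle is produced, the numerical evaluation against the area cocycle is routine.
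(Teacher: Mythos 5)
Your proposal is correct in substance and runs on the same engine as the paper's proof: pair $Vol_S$ against an explicitly constructed finite-norm $l^1$-cycle via the Kronecker product, with the positive contribution coming from geodesic triangles and the correction chains contributing zero because simplices supported on a closed geodesic have degenerate straightening (this last point, which you leave implicit in ``these sum to the total area,'' is the crux, and it is exactly the trick from \cite{mitsumatsu} that the paper cites). Where you genuinely differ is in how the cycle is manufactured. The paper never leaves $S$: it picks a hyperbolic element $\gamma$ in the commutator subgroup $[G,G]$, so that the closed geodesic $L_\gamma$ is null-homologous and bounds a triangulated subsurface $S_0 \subset S$, and takes the cycle $S_0 - c_0$, where $c_0$ is an $l^1$-filling of $L_\gamma$ supported on $L_\gamma$. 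You instead restrict to a rank-two free subgroup $F$ (effectively running Corollary \ref{c:2free} in the opposite direction) and build the cycle from the convex core $\Sigma$ of $\mathbb{H}^2/F$, capping off its geodesic boundary circles. Your route buys a canonical surface piece with area $-2\pi\chi(\Sigma)$ by Gauss--Bonnet and avoids having to produce a null-homologous geodesic and a subsurface bounding it; the paper's route avoids passing to a cover and to the convex core, and needs only one application of the filling lemma.

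There is one loose end you should close. When $S$ is non-compact --- a case the lemma explicitly includes --- two hyperbolic elements of $\pi_1(S)$ with distinct axes can generate a free group containing parabolics (e.g.\ a once-punctured-torus group, where the commutator of the two hyperbolic generators is parabolic). Then $\Sigma$ has cusps, and your capping step is no longer routine: ideal triangles are not singular simplices, and chains supported on horocycles do \emph{not} have degenerate straightenings, so the cusp caps are not visibly annihilated by $v'$; one would need a truncation-and-limit argument. The clean fix is to replace $a,b$ by sufficiently high powers, so that $F$ is a Schottky group, hence purely hyperbolic with compact convex core and geodesic boundary. Also, your alternative quasimorphism route needs a correction: unboundedness of the primitive $\phi$ is not the right criterion (a nonzero homomorphism $F \to \B R$ is an unbounded homogeneous quasimorphism with zero defect); what must be shown is $D(\phi)>0$, i.e.\ that $\phi$ is not a homomorphism, which amounts to the same geometric computation as the cycle argument.
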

\begin{proof}
We shall find 
$C\in\OP{H}_2^{l_1}(S)$ such that $\langle Vol_S, C\rangle\neq 0$.
We can assume that $S$ is a quotient of $\mathbb{H}^2$, as in the beginning of the proof of Lemma \ref{l:group=top}. 
Let $p\colon \mathbb{H}^2\to S$ be the covering map and let $G = \pi_1(S)$.
Since $G$ is a surface group or is free, the commutator subgroup $[G,G]$ is non-abelian and hence
contains a hyperbolic element $\gamma \in [G,G]$ \cite[Theorem 2.4.4]{skatok}. 
Let $A_\gamma \subset \mathbb{H}^2$ be the axis of $\gamma$.
The conjugacy class of $\gamma$ is represented by the closed geodesic $L_\gamma = p(A_\gamma)\subset S$. We regard $L_\gamma$ as a map from $[0,1]$ to $S$.
Since $\gamma \in [G,G]$, $\gamma$ is homologically trivial. 
Hence there exists a triangulated subsurface $S_0 \subset S$ such that $\partial S_0 = L_\gamma$.

The loop $L_\gamma$ is the boundary of an $l^1$-chain $c_0$ whose simplices are contained in the image of $L_\gamma$ \cite[Section 3]{mitsumatsu}. 
Thus $c = S_0-c_0$ is an $l^1$-cycle. 
Recall that $Vol_S = [v'_S]$.
The straightening of every simplex in $c_0$ is degenerate ($L_\gamma$ is a geodesic), thus we have $\langle v'_S, c_0 \rangle = 0$. 
We can assume that the triangulation of $S_0$ consists of geodesic simplices, hence $\langle v'_S, S_0\rangle$ equals the hyperbolic volume of $S_0$. Hence if we set $C~=~[c]$, we obtain $\langle Vol_S, C \rangle > 0$.
The inequality $\langle Vol_S, C \rangle \leq \n Vol_S \n \n C \n $
implies that the norm of $Vol_S$ is positive. 
\end{proof}

The following corollary is stated in the group 
version terms of the volume class. 

\begin{corollary}\label{c:2free}
Let $S$ be an oriented hyperbolic surface with non-abelian fundamental group. There exists an embedding $i \colon F \to \pi_1(S)$ of a free non-abelian group~$F$ such that $i^*(Vol_S^{gp})$ has a positive norm.
\end{corollary}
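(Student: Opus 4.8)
The plan is to realise the restriction of the volume class to a free subgroup as the volume class of a covering surface, so that the corollary reduces to a direct application of Lemma~\ref{l:Vol_S} and Lemma~\ref{l:group=top}. Write $G = \pi_1(S)$, viewed as a discrete, torsion-free subgroup of $Iso_+(\mathbb{H}^2)$ via the representation $\rho$ of Section~\ref{ss:vol} (as in the proof of Lemma~\ref{l:group=top} we may assume $S = \mathbb{H}^2/G$). Since $G$ is non-abelian and is either a free group of rank $\geq 2$ or a closed surface group of genus $\geq 2$, it contains a non-abelian free subgroup $F$; for instance one may take two suitable powers of hyperbolic elements with disjoint axes generating a Schottky group by Klein's ping-pong lemma. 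Let $i \colon F \hookrightarrow G$ denote the inclusion.

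First I would form the cover $S_F = \mathbb{H}^2/F$. Because $F < G$ and $G$ acts freely and properly discontinuously on $\mathbb{H}^2$, the same holds for $F$, so $S_F$ is a connected, oriented, aspherical hyperbolic surface with $\pi_1(S_F) \cong F$ non-abelian. The key observation is that the restriction $i^*(Vol_S^{gp})$ is, by construction, nothing other than $Vol_{S_F}^{gp}$. Indeed, by definition $Vol_S^{gp} = Vol_{\rho(G)} = [v_G]$, where $v_G$ is the restriction to $G$ of the universal volume cocycle $v$ on $Iso_+(\mathbb{H}^2)$; since $i^*$ is induced by precomposition with the inclusion, $i^*[v_G] = [v_G|_F] = Vol_{\rho(F)}$. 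On the other hand, the defining representation of $\pi_1(S_F)$ is exactly the restriction $\rho|_F$, so $Vol_{S_F}^{gp} = Vol_{\rho(F)}$ as well. Hence $i^*(Vol_S^{gp}) = Vol_{S_F}^{gp}$.

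It then remains to show that $Vol_{S_F}^{gp}$ has positive norm. Since $S_F$ is an oriented hyperbolic surface with non-abelian fundamental group, Lemma~\ref{l:Vol_S} applies (crucially, it does not assume compactness, so it is insensitive to the fact that $S_F$ typically has infinite area) and gives $\n Vol_{S_F}\n > 0$. By Lemma~\ref{l:group=top} there is an isometric isomorphism $r$ with $r(Vol_{S_F}^{gp}) = Vol_{S_F}$, whence $\n Vol_{S_F}^{gp}\n = \n Vol_{S_F}\n > 0$. Combined with the identification of the previous paragraph, this yields $\n i^*(Vol_S^{gp})\n > 0$, as required. The only step demanding care — and the reason the argument is so short — is the identification $i^*(Vol_S^{gp}) = Vol_{S_F}^{gp}$, which rests entirely on the fact that the group-level volume class is defined as the restriction of a single cocycle on $Iso_+(\mathbb{H}^2)$ and so behaves functorially under passage to subgroups; once this is noted, no norm-comparison under restriction is needed, since we compute the norm directly on the covering surface.
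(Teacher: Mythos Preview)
Your proof is correct and follows essentially the same route as the paper: pick a non-abelian free subgroup $F<\pi_1(S)$, identify $i^*(Vol_S^{gp})$ with the group volume class $Vol_{i(F)}=Vol_{S_F}^{gp}$ of the cover $S_F=\mathbb{H}^2/F$, then apply Lemma~\ref{l:group=top} and Lemma~\ref{l:Vol_S}. The paper's argument is terser but structurally identical; your added justification of the functoriality step $i^*(Vol_S^{gp})=Vol_{\rho(F)}$ is welcome and accurate.
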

\begin{proof}
    Let $F$ be any free non-abelian subgroup of $S$ (which can be equal to $\pi_1(S)$ if $S$ is not closed).
    We have $i^*(Vol_S^{gp}) = Vol_{i(F)}$. 
    By Lemma \ref{l:group=top} we know that the norm of $Vol_{i(F)}$ is 
    equal to the norm of $Vol_{S'}$ where $S' = \mathbb{H}^2/i(F)$
    and by Lemma \ref{l:Vol_S} the norm of $Vol_{S'}$ is positive. 
\end{proof}

\subsection{Volume class in dimension 3}\label{ss:dim3}

For some Kleinian groups, i.e. the discrete subgroups of $Iso_+(\mathbb{H}^3)$, 
the volume class was studied in \cite{soma}.
Note that if $G$ is a torsion-free Kleinian group, 
then it acts freely and properly discontinuously on $\mathbb{H}^3$. 
Thus $\mathbb{H}^3/G$ is a manifold.

\begin{theorem}{\cite[Theorem 1]{soma}}\label{l:soma}
Let $G < Iso_+(\mathbb{H}^3)$ be a torsion-free topologically tame Kleinian group such that the volume of $M = \mathbb{H}^3/G$ is infinite. 
Then $Vol_M$ has a positive norm if and only if $G$ is not elementary and geometrically infinite. 
\end{theorem}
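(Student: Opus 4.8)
The plan is to invoke the duality between bounded cohomology and $l^1$-homology (see \cite[Chapter 6]{frigerio}): the seminorm $\n Vol_M \n$ equals the operator norm of $Vol_M$ viewed as a functional on $\OP{H}_3^{l_1}(M)$, so $\n Vol_M \n > 0$ if and only if there exists a class $C \in \OP{H}_3^{l_1}(M)$ with $\langle Vol_M, C\rangle \neq 0$. The whole statement then reduces to showing that such a $C$ exists precisely when $G$ is geometrically infinite. Since every elementary group is geometrically finite, being geometrically infinite already forces $G$ to be non-elementary, so the two stated hypotheses collapse to ``not geometrically finite''. Throughout, $M = \mathbb{H}^3/G$ is aspherical, so Lemma \ref{l:group=top} lets me pass freely between the space-level class $Vol_M$ and its group-level counterpart $Vol_M^{gp}$.

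For the zero-norm direction I would treat the geometrically finite case (which subsumes the elementary one). When $G$ is elementary it is abelian, hence amenable, so $\OP{H}_b^3(G) = 0$ and $Vol_M = 0$ outright. In general, an infinite-volume geometrically finite $M$ properly deformation retracts, via nearest-point projection, onto its convex core $C(M)$, which is a compact $3$-manifold with nonempty boundary once the finitely many (amenable) cusps are truncated; the complement $M \setminus C(M)$ is a union of funnels $\partial C(M) \times [0,\infty)$ on which the metric flares exponentially. Homotopy invariance gives $\OP{H}_3^{l_1}(M) \cong \OP{H}_3^{l_1}(C(M))$, and a compact manifold with nonempty boundary carries no top-dimensional $l^1$-fundamental class, so $\OP{H}_3^{l_1}(M)$ pairs trivially with $Vol_M$ and $\n Vol_M \n = 0$. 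Geometrically, the exponential flaring lets any finite-norm cycle be pushed out the funnels, dissipating whatever volume it once captured.

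The substantial direction is the converse: for a non-elementary, geometrically infinite, topologically tame $G$ I must produce $C \in \OP{H}_3^{l_1}(M)$ with $\langle Vol_M, C\rangle \neq 0$, mirroring the two-dimensional argument of Lemma \ref{l:Vol_S}. By topological tameness together with the Thurston--Bonahon--Canary theory, $M$ has a simply degenerate end $E \cong S \times [0,\infty)$ carrying an exiting sequence of pleated surfaces $\Sigma_n \colon S \to M$. The decisive feature is that, by Gauss--Bonnet, each $\Sigma_n$ has area exactly $2\pi|\chi(S)|$, so it admits a triangulation into a number of straightened $2$-simplices bounded independently of $n$; this makes the $2$-cycle $[\Sigma_n]$ cheap to cap off in the $l^1$-norm. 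Triangulating the compact region between a fixed compact core and $\Sigma_n$ yields an $l^1$-chain whose $Vol_M$-pairing is the hyperbolic volume of that region, which tends to infinity with $n$, while the boundary defect coming from $\Sigma_n$ has $l^1$-norm bounded uniformly in $n$. Assembling these pieces along the end into a single genuine $l^1$-cycle produces the desired $C$, and the duality estimate $|\langle Vol_M, C\rangle| \leq \n Vol_M \n \, \n C \n$ then forces $\n Vol_M \n > 0$.

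The main obstacle is exactly this last construction: one must control the $l^1$-norms of the capping chains uniformly while ensuring that the accumulated volume does not leak away during the assembly. This is where the fine geometry of degenerate ends is indispensable: Bonahon's geometric tameness and Canary's covering and filling results guarantee that consecutive pleated surfaces bound slabs of comparable, definite volume admitting efficient triangulations, so that the ratio of captured volume to $l^1$-norm stays bounded below. The two-dimensional analogue in Lemma \ref{l:Vol_S} is elementary because a geodesic loop straightens to a degenerate, zero-volume chain; in dimension three the corresponding ``cheapness'' of the exiting pleated surfaces is a genuinely three-dimensional phenomenon that carries the full weight of the tameness theory, which is why the argument succeeds only in the geometrically infinite case.
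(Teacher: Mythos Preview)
The paper does not prove this theorem at all: it is quoted verbatim from \cite{soma} (hence the label ``\cite[Theorem 1]{soma}'' in the statement) and used as a black box in Example~\ref{e:fiber} and Theorem~\ref{t:main}. So there is no ``paper's own proof'' to compare against; your proposal is an attempt to reconstruct Soma's argument, not to fill in something the authors did.

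As a sketch of Soma's proof your outline points in the right direction---the positive-norm direction really is established by building $l^1$-cycles out of exiting pleated surfaces in a simply degenerate end---but several steps are not justified as stated. First, the opening duality claim (``$\n Vol_M\n$ equals the operator norm of $Vol_M$ on $\OP{H}_3^{l_1}(M)$'') is stronger than what the Kronecker pairing gives you directly; only the inequality $|\langle Vol_M,C\rangle|\le \n Vol_M\n\,\n C\n$ is immediate, which suffices for the hard direction but not for the easy one. Second, in the geometrically finite case your argument that $\OP{H}_3^{l_1}$ of a compact $3$-manifold with boundary ``pairs trivially with $Vol_M$'' needs more: you must show $\n Vol_M\n=0$, not merely that the pairing vanishes on $l^1$-homology, and handling the cusps (which are not compact) requires care. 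Third, and most seriously, the assembly of slabs between pleated surfaces into a single $l^1$-cycle with controlled norm and nonzero volume pairing is the entire content of Soma's paper; your paragraph acknowledges this is ``the main obstacle'' but does not resolve it. In short, the proposal is a reasonable roadmap but not a proof, and in any case the present paper simply cites the result.
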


A Kleinian group $G$ is geometrically finite if $N_\epsilon(H(L_G)/G)$ has finite volume for some $\epsilon>0$, 
where $N_\epsilon$ is an $\epsilon$ neighborhood
and $H(L_G)$ is the convex closure of the limit set of $G$ \cite[Chapter 8, Definition 8.4.1]{thurston}. A torsion-free Kleinian group $G$ is topologically tame if $\mathbb{H}^3/G$ is homeomorphic to the interior of a compact manifold. 

Note that every discrete finitely generated non-abelian free subgroup $F$ of $Iso_+(\mathbb{H}^3)$ is topologically tame \cite{agol2004tameness, calegari-gabai}, not elementary and $\mathbb{H}^3/F$ has infinite volume (otherwise, by the thick-thin decomposition, $\mathbb{H}^3/F$ would have a cusp and $\mathbb{Z}^2$ would embed in $F$). 
Let $M$ be an oriented $3$-dimensional hyperbolic manifold. 
Then for every free group $F$ in $\pi_1(M) < Iso_+(\mathbb{H}^3)$ which is geometrically infinite, $Vol_F$ has positive norm. 
Below we describe our main example of such a situation, i.e., manifolds that fiber over the circle with non-compact fiber.

\begin{example}\label{e:fiber}
Suppose $S$ is a connected oriented surface without boundary and free fundamental group $F = \pi_1(S)$. 
Let $f \in \OP{Diff}^{+}(S)$. 
The mapping torus of $f$ is a $3$-dimensional manifold 
$M_f = S \times [0,1] / \sim$ where $(x,0) \sim (f(x),1)$.
That is, the boundary components of $S \times [0,1]$ are glued together via $f$. 
Note that $M_f$ fibers over the circle with fiber $S$, and conversely, every $3$-manifold 
that fibers over a circle with fiber $S$ can be constructed in this way. 
The mapping torus $M_f$ is hyperbolic
if and only if $f$ is isotopic to a pseudo-Anosov map \cite{thurstonII}.
If $M_f$ is hyperbolic, the hyperbolic structure is unique and we have a unique class $Vol^{gp}_{M_f} \in \OP{H}_b^3(\pi_1(M_f))$.  
Now the inclusion of the fiber $S$ into $M_f$
gives an embedding $i \colon F \to \pi_1(M_f)$ and $i(F)$ is a normal subgroup of $\pi_1(M)$. It follows that the limit set of $i(F)$ is equal to the limit set of $\pi_1(M)$ \cite[Chapter 8, Corollary 8.1.3]{thurston}, thus it is the entire sphere at infinity. Hence $i(F)$ is geometrically infinite.
By Theorem \ref{l:soma} we have that $i^*(Vol^{gp}_{M_f}) = Vol_{i(F)}$ has positive norm. 
\end{example}

\subsection{Euler class}\label{ss:euler}
We briefly recall basic definitions concerning quasimorphisms \cite[Chapter 2]{frigerio}. Let $G$ be a group.  A real function $q \colon G \to \mathbb{R}$ is called a quasimorphism if there exists some $D \in \mathbb{R}$ such that 
$$|q(ab)-q(a)-q(b)| \leq D$$ for any $a,b \in G$. The minimal such $D$ is called the defect of $q$. A quasimorphism is homogeneous if $q(a^n) = nq(a)$ for any $a\in G$ and $n \in \mathbb{Z}$. The non-homogeneous coboundary $dq(a,b) = q(a)-q(ab)+q(b)$ of $q$ is interpreted as a second bounded cohomology class $[dq] \in \OP{H}_b^2(G)$. 
If $q$ is homogeneous and not a homomorphism, then $[dq]$ is non-trivial and has a positive norm \cite[Corollary 6.7]{frigerio}. 

\begin{lemma}\label{l:euler_restriction}
Let $S$ be an oriented closed surface of genus $\geq 2$.
There exists an embedding $i \colon F \to \pi_1(S)$ of a free non-abelian group $F$ such that $i^*(e_b^S)$ has a positive norm. 
\end{lemma}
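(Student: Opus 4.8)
The plan is to deduce the statement for the Euler class from the already established statement for the volume class (Corollary~\ref{c:2free}), using the fact that, for the Fuchsian boundary action, the bounded Euler class and the bounded volume class are proportional. Recall that on $\pi_1(S)$ the map $\alpha$ is the standard boundary action, i.e.\ the composition $\pi_1(S) \xrightarrow{\rho} Iso_+(\mathbb{H}^2) \to \Homeo_+(S^1)$, so that $e_b^S$ is the pull-back under $\rho$ of the (continuous) bounded Euler class of $PSL(2,\mathbb{R})$ acting on $\partial \mathbb{H}^2 = S^1$; this is precisely the boundary through which $Vol_S^{gp}$ is also defined.

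First I would establish a proportionality $Vol_S^{gp} = c\, e_b^S$ in $\OP{H}_b^2(\pi_1(S))$ for some nonzero real constant $c$. Both sides are pull-backs under $\rho$ of $Iso_+(\mathbb{H}^2)$-invariant bounded $2$-cocycles built from the boundary circle, so it suffices to compare these two invariant cocycles on $Iso_+(\mathbb{H}^2)$. To do this I would use the straightening homotopy already invoked in Section~\ref{ss:vol}: pushing the vertices $g_0(*), g_1(*), g_2(*)$ of the geodesic triangle defining $v$ along geodesics toward a fixed boundary point turns $v$ into the ideal-area cocycle, whose value is $\pi \cdot \OP{Or}(\cdots) \in \{-\pi, 0, \pi\}$, where $\OP{Or}$ is the orientation cocycle on $S^1$. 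Since the orientation cocycle represents a nonzero multiple of $e_b$ (a classical fact, from the theory of groups acting on the circle) and the ideal-triangle area $\pi$ is nonzero, this yields $Vol = c\, e_b$ with $c \neq 0$ on $Iso_+(\mathbb{H}^2)$, and pulling back by $\rho$ gives the claim. Alternatively one may simply quote $\dim_{\mathbb{R}} \OP{H}^2_{cb}(PSL(2,\mathbb{R})) = 1$ together with the nonvanishing of both classes.

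With the proportionality in hand the lemma is immediate. Corollary~\ref{c:2free} supplies an embedding $i \colon F \to \pi_1(S)$ of a free non-abelian group with $\n i^*(Vol_S^{gp}) \n > 0$. Since restriction is linear, the same $i$ satisfies $i^*(e_b^S) = c^{-1}\, i^*(Vol_S^{gp})$, whence $\n i^*(e_b^S) \n = |c|^{-1}\, \n i^*(Vol_S^{gp}) \n > 0$.

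The hard part will be pinning down the proportionality $Vol = c\, e_b$ cleanly at the level of the bounded cohomology of the discrete group, i.e.\ producing an honest $Iso_+(\mathbb{H}^2)$-invariant bounded primitive rather than merely appealing to one-dimensionality of $\OP{H}^2_{cb}(PSL(2,\mathbb{R}))$; the straightening argument does this, but the bookkeeping of the constant $c$ and of orientation conventions must be handled with care. A fully self-contained alternative, using only the quasimorphism machinery recalled in Section~\ref{ss:euler}, is to note that since $F$ is free the action $\rho|_F$ lifts to $\tilde{\rho} \colon F \to \Homeo_+^{\mathbb{Z}}(\mathbb{R})$, so that $i^*(e_b^S) = [d(\tau \circ \tilde{\rho})]$, where $\tau$ is the translation-number quasimorphism; by the criterion recalled just before this lemma it then suffices to exhibit $a, b \in F$ with $\tau(\tilde{\rho}([a,b])) \neq 0$, so that $\tau \circ \tilde{\rho}$ is a homogeneous quasimorphism that is not a homomorphism. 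I expect the proportionality route to be shorter, since verifying the non-homomorphism property essentially amounts to recomputing the nontriviality of the Euler number by hand.
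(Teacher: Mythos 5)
Your proposal is correct in outline, but it takes a genuinely different route from the paper. The paper works upstairs in the unit tangent bundle: writing $q \colon \pi_1(T^1S) \to \pi_1(S)$ for the projection, it uses Huber's rotation quasimorphism $Rot$ on $\pi_1(T^1S)$, which is homogeneous of defect $1$ and satisfies $q^*(e_b^S) = [dRot]$; choosing $a,b$ with $Rot(ab) \neq Rot(a)+Rot(b)$, it sets $F = \la q(a), q(b) \ra$, observes that $Rot$ restricted to $q^{-1}(F)$ is a homogeneous quasimorphism that is not a homomorphism (hence $q^*(e_b^S)$ restricted there has positive norm, so $i^*(e_b^S)$ does too, as pull-backs do not increase norms), and then checks that $F$ is free of rank $2$ via the classification of subgroups of surface groups. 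Your main route instead reduces the Euler statement to Corollary \ref{c:2free} via the proportionality $Vol_S^{gp} = c\, e_b^S$, $c \neq 0$, in $\OP{H}_b^2(\pi_1(S))$. That proportionality is true and classical, but it is the crux and it is not free: one must know either that both classes are pull-backs of \emph{continuous} bounded classes on $Iso_+(\mathbb{H}^2)$ (the volume cocycle visibly is; for the Euler class this requires the fact that the restriction of $e_b$ to $PSL(2,\mathbb{R})$ comes from $\OP{H}^2_{cb}(PSL(2,\mathbb{R})) \simeq \mathbb{R}$, via Burger--Monod or a Borel-cocycle argument), or the Ghys--Matsumoto--Iozzi description of the bounded Euler class of a circle action by the orientation cocycle, which is what your straightening-to-ideal-triangles argument implicitly invokes --- one-dimensionality of ordinary $\OP{H}^2$ or agreement under the comparison map would \emph{not} suffice, since the kernel of the comparison map on $\OP{H}_b^2(\pi_1(S))$ is infinite dimensional, so this point genuinely needs the continuous/boundary-cocycle input. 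Granting that citation, your argument is shorter and reuses Lemma \ref{l:Vol_S}, and it yields the stronger statement that \emph{every} free non-abelian subgroup works; the paper's argument, by contrast, is self-contained modulo Huber's defect computation and never leaves degree-two quasimorphism technology, which is also why it produces a specific rank-two subgroup rather than an arbitrary one. Your ``self-contained alternative'' (lifting the action of a free subgroup and using the translation quasimorphism) is in fact close in spirit to what the paper does; the paper avoids the by-hand Milnor--Wood-type computation you flag precisely by quoting that $Rot$ has defect exactly $1$.
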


\begin{proof}
Let $T^1S$ be the unit tangent bundle of $S$ and denote by $q \colon \pi_1(T^1S) \to \pi_1(S)$ the map induced by the projection $T^1S \to S$. 
We will use the rotation quasimorphism $Rot \colon \pi_1(T^1S) \to \mathbb{R}$ defined in \cite{Huber}.
It is a homogeneous quasimorphism of defect $1$ which trivialises the pull-back $q^*(e_b^S) \in \OP{H}_b^2(\pi_1(T^1S))$, i.e., $q^*(e_b^S) = [d Rot]$ \cite[Theorem 5.9]{Huber}. 

Let $a,b \in \pi_1(T^1S)$ be such that $Rot(ab) \neq Rot(a)+Rot(b)$. Let $F = \la q(a),q(b) \ra < \pi_1(S)$. Denote this inclusion by $i \colon F \to \pi_1(S)$ and set $F' = q^{-1}(F)$. It follows from the definition of $a$ and $b$, that $Rot_{|F'}$ is a homogeneous quasimorphism that is not a homomorphism. Thus $q^*(e^S_b)_{|F'}$ has positive norm and $i^*(e^S_b)$ must have positive norm, since $q^*i^*(e^S_b) = q^*(e^S_b)_{|F'}$. Finally, $F$ must be free of rank $2$. Indeed, every subgroup of $\pi_1(S)$ is free non-abelian, abelian, or is a surface group. But surface groups are not generated by $2$ elements and abelian groups do not carry a non-trivial class in their second bounded cohomology. Thus $F$ is free non-abelian of rank $2$.
\end{proof}

\section{Proof of the theorem}\label{s:proof}
Let $M$ be a manifold with a volume form $\omega$.
As usual, $\omega$ denotes as well the induced measure and we assume that this measure is finite. 
Suppose $i \colon F \to \pi_1(M)$ is an embedding and consider $i^* \colon \OP{H}_b^\bullet(\pi_1(M)) \to \OP{H}_b^\bullet(F)$. 
Let $\rho \colon F \to \OP{Homeo}_0(M,\omega)$ be a representation of $F$ by homeomorphisms. 
Let $\rho^* \colon \OP{H}_b^\bullet(\OP{Homeo}_0(M,\omega)) \to \OP{H}_b^\bullet(F)$.
Thus we have a not necessarily commutative diagram:
$$
\begin{tikzcd}
	\OP{H}_b^\bullet(\pi_1(M)) \arrow[r, "\Gamma_b"] \arrow[d,"i^*"] &  
	\OP{H}_b^\bullet(\OP{Homeo}_0(M,\omega)) \arrow[dl,"\rho^*"]\\
	\OP{H}_b^\bullet(F) 
\end{tikzcd}
$$
Let $\Lambda, \epsilon \in \mathbb{R}$. We say that $\rho$ is an $(F, \Lambda,\epsilon)$-inverse of $\Gamma_b$ if for every $C \in \OP{H}_b^\bullet(\pi_1(M))$ we have
$$\n \rho^*\Gamma_b (C)-\Lambda i^*(C) \n \leq \epsilon \n C \n.$$

\begin{lemma}\label{l:representation}
    Let $M$ be a manifold and $\omega$ a finite measure induced by a volume form on~$M$. 
    Suppose that $i \colon F \to \pi_1(M)$ is an embedding of a non-abelian free group $F$. There exists $\Lambda \in \mathbb{R}$ such that  
    for every $\epsilon>0$ there exists an $(F, \Lambda,\epsilon)$-inverse of $\Gamma_b$.
\end{lemma}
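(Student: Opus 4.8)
The plan is to take $\Lambda=\omega(M)$ and to reduce the cohomological estimate to a pointwise statement about the cocycle $\gamma$ from Section~\ref{SS:gamma}. Concretely, it suffices to produce, for each $\epsilon>0$, a homomorphism $\rho\colon F\to\Homeo_0(M,\omega)$ together with a measurable set $A\subseteq M$ such that $\omega(M\setminus A)<\epsilon/2$ and $\gamma(\rho(w),x)=i(w)$ for every $w\in F$ and $\omega$-almost every $x\in A$. Granting this, fix $C\in\OP{H}_b^\bullet(\pi_1(M))$ and a representing cocycle $c$ with $\n c\n$ arbitrarily close to $\n C\n$. By $\pi_1(M)$-invariance of $c$ we have, for a.e.\ $x\in A$ and every tuple $(w_0,\dots,w_n)$,
\[
c\big(\gamma(\rho(w_0),x),\dots,\gamma(\rho(w_n),x)\big)=c\big(i(w_0),\dots,i(w_n)\big)=(i^*c)(w_0,\dots,w_n).
\]
Splitting the integral defining $\rho^*\Gb(c)(w_0,\dots,w_n)$ over $A$ and $M\setminus A$ gives the cochain bound $\n\rho^*\Gb(c)-\omega(M)\,i^*c\n\le 2\,\omega(M\setminus A)\,\n c\n$, since $\n i^*c\n\le\n c\n$ and the integrand is bounded by $\n c\n$ off $A$. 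As $\rho^*\Gb(c)-\omega(M)\,i^*c$ represents $\rho^*\Gb(C)-\Lambda i^*(C)$, passing to the seminorm and letting $\n c\n\to\n C\n$ yields $\n\rho^*\Gb(C)-\Lambda i^*(C)\n\le\epsilon\n C\n$, as required.

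It then suffices to arrange the pointwise identity on generators. Fix a free generating set $\{a_j\}$ of $F$ and choose $A$ to be invariant under every $\rho(a_j)^{\pm1}$. If $\gamma(\rho(a_j),x)=i(a_j)$ for a.e.\ $x\in A$ and all $j$, then the cocycle relation $\gamma(\rho(vw),x)=\gamma(\rho(v),x)\,\gamma(\rho(w),\rho(v)(x))$ together with the invariance of $A$ propagates the equality $\gamma(\rho(w),x)=i(w)$ to every reduced word $w$, by induction on word length. Thus the whole problem reduces to constructing the homeomorphisms $\rho(a_j)$ and the common invariant set $A$.

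For the construction I would work in the universal cover, where the condition becomes transparent. Since $\rho(a_j)\in\Homeo_0$, its canonical lift $\wt{\rho(a_j)}$ is $\pi_1(M)$-equivariant, and the defining relation for $\gamma$ shows that $\gamma(\rho(a_j),x)=i(a_j)$ on $A$ is equivalent to $\wt{\rho(a_j)}$ carrying the lift of $A$ inside a chosen fundamental domain into the adjacent tile obtained by translating by $i(a_j)$. I would therefore build each $\rho(a_j)$ as a point-pushing homeomorphism supported near (the $\rho(F)$-orbit of) an embedded based loop representing $i(a_j)$, whose supporting isotopy drags this lift one tile over in the $i(a_j)$-direction, and whose supports are placed in ping-pong (Schottky) position so that the $a_j$ generate freely and $A$ is simultaneously invariant. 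The independence of $\Gb$ from the measurable system of paths (Section~\ref{ss:pi-coupling}), i.e.\ the freedom to choose the fundamental domain adapted to the dynamics, is what legitimises the tile-shift description.

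The main obstacle is precisely this last step: making the good set $A$ have measure at least $\omega(M)-\epsilon/2$ while keeping $\gamma(\rho(a_j),\cdot)$ equal to $i(a_j)$ exactly on $A$ and $A$ invariant. These requirements pull in opposite directions, since the points where a push around a thin tube registers the full class $i(a_j)$ tend to be exactly the points it leaves essentially fixed, so a naive construction fills only a small measure; moreover the center-freeness of $\pi_1(M)$ (via \cite[Proposition 3.1]{BM}) forbids reading the class off a nontrivial loop in $\Homeo_0(M)$. I expect the resolution to require spreading the displacement over a set of nearly full measure and controlling the complementary transition region, organised through the coupling/transfer formalism of \cite{nitsche} and the independence of the transfer from the fundamental domain; this is exactly where the present argument simplifies and generalises \cite[Lemma 3.1]{BM}. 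The remaining points (measurability of $A$, that the $\rho(a_j)$ preserve $\omega$, and the bookkeeping for an infinite generating set) are routine.
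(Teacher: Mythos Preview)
Your reduction to a pointwise control of $\gamma$ is exactly right, and your cocycle identity together with the splitting of the integral is clean. The gap is in the choice $\Lambda=\omega(M)$, which forces you to seek $\gamma(\rho(w),x)=i(w)$ on a set of nearly full measure. You correctly identify this as ``the main obstacle'' and then do not resolve it; the paragraph that follows is a statement of hope, not a construction. As you yourself observe, for a finger-push supported near a thin tube, the points that register the full class $i(a_j)$ are precisely those left fixed by the time-one map, and this set has small measure tied to the tube, not to $\omega(M)$. Stacking several generators makes this worse, not better.

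The missing idea is that $\Lambda$ need not be $\omega(M)$. The paper takes $\Lambda=\omega(B)$ for a \emph{fixed} closed ball $B$ containing the basepoint, and partitions $M$ into three regions rather than two: a ``good'' set $B$ (minus a thin shell) on which $\gamma(\rho(w),x)=i(w)$ for every $w$; a large ``trivial'' region $M\setminus\bigcup N_i$ on which $\gamma(\rho(w),x)=e$ for every $w$; and a small error set $E$ of controllable measure. The trivial region is handled by replacing $c$ by its antisymmetrisation (same class), so that $c(e,\ldots,e)=0$ and this region contributes nothing to the integral. Your two-region decomposition $A\cup(M\setminus A)$ discards exactly this trick, and that is why you are stuck trying to make $A$ fill almost all of $M$. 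Once you allow a third region on which $\gamma$ is constantly $e$, the construction becomes straightforward: $\rho(a_j)$ is the standard volume-preserving finger-push along a loop for $i(a_j)$, supported in $B\cup(\text{thin tube})$; the shell and tube measures are made small, and the estimate $\n\rho^*\Gb(c)-\omega(B)\,i^*c\n\le[\omega(\text{shells})+\omega(E)]\,\n c\n$ follows exactly as in your first paragraph.
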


\begin{proof} 
Let $\OP{dim}(M)=m$. Denote by $B^{m-1} \subset \mathbb{R}^{m-1}$ the $m-1$ dimensional closed unit ball, and let $S^1 = \mathbb{R}/\mathbb{Z}$. 
Let us fix $\eta \in (0,1)$ and define an isotopy $P^t_\eta~\in~\OP{Diff}(S^1 \times B^{m-1})$ by  
$$P^t_\eta(\psi,x)=(\psi+tf(\n x\n),x) \hspace{0.4cm} \forall (\psi,x) \in S^1 \times B^{m-1},$$ 
where $t \in [0,1]$, and $f:[0,1]\to \mathbb{R}$ is a~smooth function such that 
$f(y) = 1$ for $y \leq 1-\eta$ and $f(1) = 0$.
We call $P^t_\eta$ the finger-pushing isotopy and $P^1_\eta$ the finger-pushing map. 
Note that $P^0_\eta = Id$ and that $P^1_\eta$ fixes point-wise the boundary of $S^1 \times B^{m-1}$
and fixes all points $(\psi,x)$ for which $\n x \n \leq 1-\eta$. Moreover, $P^t_\eta$ fixes the boundary of $S^1 \times B^{m-1}$ for all $t$. 
Denote by $g_0$ be the product of the standard Euclidean Riemannian metrics on $B^{m-1}$ and $S^1$.
By the theorem of Fubini, the measure induced by $g_0$~is preserved 
by the map $P^t_\eta$ for every $t \in [0,1]$.
Let $a_1,\ldots, a_k$ be generators of $F:=F_k$, where $k>1$. 
We represent $i(a_i)$ by a loop $\alpha_i$ 
which is based at $* \in M$. 

Let $B$ be a closed ball in $M$ containing $*$ and set $\Lambda = \omega(B)$. Suppose $A_i$ are closed small tubular neighborhoods of $\alpha_i$.
Then $N_i=B\cup A_i$~is a~closed neighborhood of $\alpha_i$ which is diffeomorphic to $S^1 \times B^{m-1}$.
Let $P^t_{\eta}(\alpha_i) \in \Diff_0(M)$~be the isotopy
defined by pulling-back $P^t_{\eta}$ via a diffeomorphism $n_{\alpha_i} \colon N_i \to S^1 \times B^{m-1}$ (i.e., we have $P^t_{\eta}(\alpha_i) = n_{\alpha_i}^{-1} \circ P^t_{\eta} \circ n_{\alpha_i}$ on $N_i$), and extending it by the identity outside~$N_i$. Note that the Moser trick \cite{Moser} allows us to choose $n_{\alpha_i}$ such that $P^t_{\eta}(\alpha_i)$ preserves~$\omega$.
Let $S_i$ be the support of $P^1_{\eta}(\alpha_i)$. It is a small thickening of the boundary of $N_i$.

The homomorphism $\rho \colon F \to \OP{Homeo}_0(M,\omega)$  is given~by: 
$$\rho(a_i) = P^1_{\eta}(\alpha_i).$$
To simplify the notation, we identify $F$ with its image $i(F)$.
Now we consider the values of $\gamma$ on elements of the form $(\rho(w),x)$, 
where $w\in F, x \in M$.

From the description of $\gamma$ in Section \ref{SS:gamma} we have:
$$
\gamma(\rho(w),x) = 
\begin{cases}
	e & x \in M-\bigcup_{i=1}^kN_i,\\
	w & x \in B - \bigcup_{i=1}^k S_i,\\
	? & x \in (\bigcup_{i=1}^k A_i - B) \cup \bigcup_{i=1}^k S_i.
\end{cases}
$$
Let $C \in \OP{H}_b^n(M)$ and let $c$ be a bounded cochain representing $C$. Since any cochain and its anti-symmetrisation define the same class, we may assume that $c(e,\ldots,e)~=~0$. 
Let $$\bar{f}~=~(f_0,f_1,\ldots, f_n)\in~\OP{Homeo}_0(M,\omega)^{n+1},$$ and denote 
$$\gamma(\bar{f},x)=(\gamma(f_0,x),\gamma(f_1,x),\ldots,\gamma(f_n,x)).$$ 
Let $\overline{w} \in F^{n+1}$. We have: 
$$\rho^*\Gamma_b (c)(\overline{w})=\Gamma_b (c)(\rho(\overline{w}))=\int_{M} c(\gamma(\rho(\overline{w}),x))d\omega(x).$$
Denote $E :=(\bigcup_{i=1}^k A_i - B) \cup \bigcup_{i=1}^k S_i$. We obtain

\begin{align*}\label{e:split} 
	\rho^*\Gamma_b (c)(\overline{w}) &= 
     \int_{B - \bigcup_{i=1}^k S_i}c(\overline{w})d\omega(x) +
	\int_{E}c(\gamma(\rho(\overline{w}),x))d\omega(x)\\
    &=\omega\left(B - \bigcup_{i=1}^k S_i\right)i^*(c)(\overline{w})+
    \int_{E}c(\gamma(\rho(\overline{w}),x))d\omega(x).
\end{align*}
Let 
$$c_{res}(\overline{w}):=\int_{E}c(\gamma(\rho(\overline{w}),x))d\omega(x).$$
Note that $c_{res}$ represents a class in $\OP{H}_b^n(F)$ and we can write
$$
\rho^*\Gamma_b (c) = 
\omega(B - \bigcup_{i=1}^k S_i)i^*(c) + c_{res},
$$
and  
$$\n c_{res}\n \leq \omega(E)\n c\n.$$
Moreover:
\begin{align*}
&\n \rho^*\Gamma_b (c)-\omega(B)i^*(c)\n\leq\\ 
&\leq \omega(\bigcup_{i=1}^k S_i)\n i^*(c)\n +\omega(E)\n c\n \leq\\
& \leq \big[ \omega (\bigcup_{i=1}^k S_i) + \omega(E) \big] \n c \n\ .
\end{align*}
Now $\omega(S_i)$ and $\omega(E)$ can be taken to be arbitrarily small by taking small $\eta$ and small neighborhoods $A_i$. The cochain $c$ was any cochain representing $C$. Thus for any chosen $\epsilon$ we can have:
$$\n \rho^*\Gamma_b (C)-\Lambda i^*(C)\n\leq \epsilon \n C \n,$$
where $\Lambda = \omega(B)$.
\end{proof}

\begin{theorem}\label{t:main}
    Let $M$ be an oriented manifold of dimension $n$ such that it is either:
    \begin{itemize}
        \item A hyperbolic surface with a non-abelian fundamental group or
        \item A complete hyperbolic $3$-manifold whose fundamental group contains a geometrically infinite finitely generated free group (e.g. $M$ fibers over the circle with non-compact fiber).
    \end{itemize}
    Let $\omega$ be a volume form on $M$ such that the induced measure is finite. Then $\Gamma_b(Vol_M^{gp}) \in \OP{H}_b^n(\OP{Homeo}_0(M,\omega))$ has positive norm.
\end{theorem}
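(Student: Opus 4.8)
The plan is to pull the positive-norm phenomenon back from a free subgroup of $\pi_1(M)$ to $\OP{Homeo}_0(M,\omega)$ using the approximate inverse supplied by Lemma \ref{l:representation}. The whole argument is an assembly of the restriction results of Section \ref{s:restricting} with the finger-pushing construction of Section \ref{s:proof}.

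First I would fix an embedding $i \colon F \to \pi_1(M)$ of a non-abelian free group for which $i^*(Vol_M^{gp})$ has positive norm. In the surface case this is precisely Corollary \ref{c:2free}. In the $3$-manifold case the hypothesis provides a finitely generated geometrically infinite free group $F < \pi_1(M) < Iso_+(\mathbb{H}^3)$, and Theorem \ref{l:soma} together with the discussion of Section \ref{ss:dim3} gives $\n i^*(Vol_M^{gp}) \n = \n Vol_{i(F)} \n > 0$. Set $\delta := \n i^*(Vol_M^{gp}) \n > 0$. Since restriction does not increase norm we also have $\n Vol_M^{gp} \n \ge \delta > 0$, and $\n Vol_M^{gp} \n < \infty$ because the signed volumes of geodesic simplices in $\mathbb{H}^n$ are uniformly bounded, so that the ratio appearing below is a well-defined positive number.

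Next I would feed this embedding into Lemma \ref{l:representation}. It produces a constant $\Lambda = \omega(B)$, where $B$ is a closed ball about the basepoint; since $\omega$ is induced by a volume form, $\Lambda > 0$. For every $\epsilon > 0$ the lemma supplies a homomorphism $\rho \colon F \to \OP{Homeo}_0(M,\omega)$ that is an $(F,\Lambda,\epsilon)$-inverse of $\Gb$, so in particular
\[
\n \rho^*\Gb(Vol_M^{gp}) - \Lambda\, i^*(Vol_M^{gp}) \n \le \epsilon \n Vol_M^{gp} \n .
\]
Finally I would close the argument by the reverse triangle inequality. Because pullback in bounded cohomology is norm non-increasing, for each $\epsilon$ we obtain
\[
\n \Gb(Vol_M^{gp}) \n \ge \n \rho^*\Gb(Vol_M^{gp}) \n \ge \Lambda\delta - \epsilon \n Vol_M^{gp} \n .
\]
Choosing $\epsilon < \Lambda\delta / \n Vol_M^{gp} \n$ makes the right-hand side strictly positive, and since the left-hand side is independent of $\epsilon$ this forces $\n \Gb(Vol_M^{gp}) \n > 0$.

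The assembly above is entirely routine; all the genuine content sits in the cited inputs. I expect the main obstacle to lie upstream rather than in this final step: first in establishing positivity of $\delta$, namely Lemma \ref{l:Vol_S} in dimension $2$ and Theorem \ref{l:soma} in dimension $3$, which are intrinsically low-dimensional and explain why the theorem is not known in higher dimensions; and second in the construction of the approximate inverse $\rho$ by finger-pushing maps, where the error contributions $\omega(E)$ and $\omega(\bigcup_i S_i)$ must be driven below any prescribed threshold by shrinking the support parameter $\eta$ and the tubular neighborhoods $A_i$, all while the maps remain $\omega$-preserving (handled by the Moser trick).
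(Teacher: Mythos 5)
Your proposal is correct and follows essentially the same route as the paper: combine Corollary \ref{c:2free} (respectively Theorem \ref{l:soma}) with the $(F,\Lambda,\epsilon)$-inverse from Lemma \ref{l:representation}, apply the reverse triangle inequality, choose $\epsilon < \Lambda\delta/\n Vol_M^{gp}\n$, and conclude using that $\rho^*$ is a contraction. The only additions beyond the paper's argument are your explicit (and correct) remarks that $\Lambda=\omega(B)>0$ and that $\n Vol_M^{gp}\n$ is finite, both of which the paper leaves implicit.
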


\begin{proof}
    By Corollary \ref{c:2free} and Theorem \ref{l:soma} in both cases we have a free group $F$ and 
    an embedding $i \colon F \to \pi_1(M)$ such that $i^*(Vol_M^{gp})$ has positive norm. 
    Let $\rho \colon F \to \OP{Homeo}_0(M,\omega_h)$ be an $(F, \Lambda,\epsilon)$-inverse of $\Gamma_b$ with 
    $\epsilon$ satisfying 
    $$0< \Lambda \n i^*(Vol_M^{gp})\n - \epsilon \n Vol_M^{gp}\n.$$ 
    We have 
    $$\Lambda \n i^*(Vol_M^{gp})\n - \n\rho^*\Gamma_b (Vol_M^{gp})\n 
    \leq \n\Lambda i^*(Vol_M^{gp}) - \rho^*\Gamma_b (Vol_M^{gp})\n 
    \leq \epsilon \n Vol_M^{gp}\n .$$
    Thus 
    $$0 < \Lambda \n i^*(Vol_M^{gp})\n - \epsilon \n Vol_M^{gp}\n \leq \n\rho^*\Gamma_b (Vol_M^{gp})\n.$$
    Since $\rho^*$ is a contraction, $\Gamma_b (Vol_M^{gp})$ must have positive norm.
\end{proof}

\begin{theorem}\label{t:main2}
Let $S$ be an oriented closed surface of genus $\geq 2$ and $\omega$ a measure induced by an area form on~$S$.
Then the classes $\Gamma_b(e_b^S) \in \OP{H}_b^2(\Homeo_0(S,\omega))$
and $\Gamma_b^{\MP}(e_b^{\MP}) \in \OP{H}_b^2(\Homeo(S,\omega))$ have positive norms.
\end{theorem}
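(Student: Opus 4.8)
The plan is to prove the two statements separately, and to deduce the second from the first using the commutative square of Section~\ref{ss:gamma^m}. For the class $\Gamma_b(e_b^S)$ I would reproduce verbatim the argument of Theorem~\ref{t:main}, only replacing Corollary~\ref{c:2free} by Lemma~\ref{l:euler_restriction}. Concretely, Lemma~\ref{l:euler_restriction} supplies an embedding $i \colon F \to \pi_1(S)$ of a non-abelian free group with $\n i^*(e_b^S)\n > 0$, and Lemma~\ref{l:representation} then produces a constant $\Lambda$ and, for any prescribed $\epsilon > 0$, an $(F,\Lambda,\epsilon)$-inverse $\rho \colon F \to \Homeo_0(S,\omega)$ of $\Gamma_b$.

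Fixing $\epsilon$ small enough that $0 < \Lambda \n i^*(e_b^S)\n - \epsilon \n e_b^S\n$, the defining inequality of an $(F,\Lambda,\epsilon)$-inverse together with the triangle inequality yields
\[
0 < \Lambda \n i^*(e_b^S)\n - \epsilon \n e_b^S\n \leq \n \rho^*\Gamma_b(e_b^S)\n \leq \n \Gamma_b(e_b^S)\n ,
\]
the final step because $\rho^*$, being induced by a group homomorphism, is norm non-increasing. This settles the first class; it is genuinely just Theorem~\ref{t:main} with a different input lemma, and no new difficulty appears.

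For the second class I would avoid repeating the finger-pushing construction and instead exploit the commutative diagram relating $\Gamma_b^{\MP}$ and $\Gamma_b$ (oriented version of Section~\ref{ss:gamma^m}). Writing $j \colon \Homeo_0(S,\omega) \hookrightarrow \Homeo_+(S,\omega)$ for the inclusion and recalling that $e_b^S = Pu^*(e_b^{\MP})$ is by definition the restriction of $e_b^{\MP}$ along $Pu \colon \pi_1(S,*) \hookrightarrow \MCG_+(S,*)$, commutativity of the square gives
\[
j^*\Gamma_b^{\MP}(e_b^{\MP}) = \Gamma_b\big(Pu^*(e_b^{\MP})\big) = \Gamma_b(e_b^S).
\]
Since $j^*$ is again a contraction, $\n \Gamma_b^{\MP}(e_b^{\MP})\n \geq \n j^*\Gamma_b^{\MP}(e_b^{\MP})\n = \n \Gamma_b(e_b^S)\n > 0$ by the first part. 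Thus the whole theorem reduces to the positivity of $\n\Gamma_b(e_b^S)\n$.

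The main obstacle lies not in this assembly but upstream: the two ingredients, Lemma~\ref{l:euler_restriction} and Lemma~\ref{l:representation}, carry the real content. Lemma~\ref{l:euler_restriction} encodes the nontriviality of the bounded Euler class on a free subgroup through the rotation quasimorphism on $\pi_1(T^1S)$, while Lemma~\ref{l:representation} is the finger-pushing construction making $\Gamma_b$ invertible up to scale on $F$. Granted both, the present statement is a short norm-estimate-plus-contraction argument; the only points requiring care are the identity $e_b^S = Pu^*(e_b^{\MP})$ and the commutativity of the square, both already established in Section~\ref{ss:gamma^m}.
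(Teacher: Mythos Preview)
Your proposal is correct and matches the paper's own proof essentially line for line: the paper also says the first claim follows exactly as in Theorem~\ref{t:main} with Lemma~\ref{l:euler_restriction} in place of Corollary~\ref{c:2free}, and then deduces the second claim from the commutative square relating $\Gamma_b^{\MP}$ and $\Gamma_b$ via $Pu^*(e_b^{\MP})=e_b^S$. The only additional remark in the paper is that $\Homeo(S,\omega)=\Homeo_+(S,\omega)$ since area-preserving homeomorphisms automatically preserve orientation, which justifies using the oriented square.
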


\begin{proof}
The proof for $e_b^S$ is the same as in Theorem \ref{t:main} using Lemma \ref{l:euler_restriction}.
Note that elements of $\Homeo(S,\omega)$ automatically preserve the orientation of $S$. 
Positivity of the norm of $e_b^{\MP}$ follows from the commutative diagram

$$
\begin{tikzcd}
\OP{H}_b^2(\MCG_+(S,*)) \arrow{d}{Pu^*}\arrow{r}{\Gamma_b^{\MP}}& \OP{H}_b^2(\Homeo(S,\omega)) \arrow{d}\\
\OP{H}_b^2(\pi_1(S,*)) \arrow{r}{\Gamma_b}& \OP{H}_b^2(\Homeo_0(S,\omega)),
\end{tikzcd}
$$

where $Pu \colon \pi_1(S,*) \to \MCG_+(S,*)$ is the injection from the Birman exact sequence. 
\end{proof}

\section{Dirac measure}\label{s:dirac}

The constructions of $\Gb$ and $\Gb^\M$ are flexible and admit more variants. First of all, one does not need to restrict to measures coming from a volume form. What is needed, is a measure with a cocycle for which the integral in the definition is well-defined. Moreover, one can relax the definition of an isotopy. For example, it is not necessary to assume that isotopy preserves the measure at all times. Isotopy might be as well substituted by homotopy. 

In this short section, we discuss the (somewhat degenerate) case where the measure is the Dirac measure and isotopies do not preserve the measure. In this case, $\Gb$ is induced by a homomorphism.
Let $M$ be a manifold and $* \in M$ a basepoint. 
We assume that the center of $\pi_1(M,*)$ is trivial (what we really need to assume is the triviality of $ev_1$, and even in the case where it is not, one could substitute $\pi_1(M,*)$ with the quotient $\pi_1(M,*)/im(ev_1)$). By $*$ we denote as well the Dirac measure centered on $*$.

Let $G$ be the subgroup of $\Homeo_0(M)$ of all homeomorphisms $f$ preserving $*$. Thus an element of $G$ is isotopic to the identity by an isotopy that can move $*$. Suppose $S$ is a system of paths. As in Section \ref{SS:gamma}, we get a cocycle:

$$
\gamma \colon G \times M \to \pi_1(M,*)
$$

and a map
$$\Gamma_b \colon \OP{H}_b^\bullet(\pi_1(M,*))\to\OP{H}_b^\bullet(G).$$

Note that on $\wt{M}$ we can consider the counting measure on the orbit $p^{-1}(*)$. With such a measure $\wt{M}$ defines a coupling and every $S$ is measurable. Moreover, $\Gamma_b$ does not depend on $S$ \cite[Lemma 3.3]{nitsche}.

Recall that we have a homomorphism 

$$Tr \colon G \to \pi_1(M,*)$$

defined in the following way: $Tr(f)$ is the homotopy class of the loop $f_t(*)$, where $f_t$ is any isotopy between $Id_M$ and $f$. 

It is straightforward to see that $\Gamma_b = Tr^*$, the map induced on bounded cohomology by $Tr$. Note that if we started with the group $\Homeo_0(M,*)$ (isotopies preserve $*$ at all times), instead of $G$, then 
$\Gamma_b \colon \OP{H}_b^\bullet(\pi_1(M,*)) \to \OP{H}_b^\bullet(\Homeo_0(M,*))$ 
would be trivial in positive degrees. Indeed, in this case $\Gamma_b(c)$ is a constant cocycle, and constant cocycles in positive degrees represent trivial classes.

Suppose that a non-abelian free group $F$ embeds in $\pi_1(M,*)$. The representations $\rho$ constructed in Lemma \ref{l:representation} are homomorphisms. Since $\gamma(\rho(w),*) = w$ for every $w \in F$, the following diagram commutes:
$$
\begin{tikzcd}
	\OP{H}_b^\bullet(\pi_1(M,*)) \arrow[r, "\Gamma_b"] \arrow[d,"i^*"] &  
	\OP{H}^\bullet_{b}(G) \arrow[dl,"\rho^*"]\\
	\OP{H}^\bullet_b(F) 
\end{tikzcd}
$$
Thus $\rho$ is a $(F,1,0)$-inverse of $\Gb$. 
It follows that Theorem \ref{t:main} and Theorem \ref{t:main2} hold as well for the Dirac measure. Similarly, Theorem A and Theorem B from \cite{BM} hold with $\mathcal{T}_M = G$, i.e., $Tr^*$ has the image of dimension continuum in degree $2$ and $3$ (if $M$ satisfies the assumptions of Theorems A and B). 

\bibliographystyle{alpha}
\bibliography{bibliography}

\end{document}